\newtheorem{theorem}{Theorem}[section]
\newtheorem*{theorem*}{Theorem}
\newtheorem{proposition}[theorem]{Proposition}
\newtheorem{corollary}[theorem]{Corollary}
\newtheorem{lemma}[theorem]{Lemma}
\newtheorem{claim}{Claim}[theorem]
\theoremstyle{definition}
\newtheorem{definition}[theorem]{Definition}
\newtheorem*{conj}{Conjecture}
\theoremstyle{remark}
\newtheorem{remark}[theorem]{Remark}
\newtheorem{fact}[theorem]{Fact}
\begin{document}

\title{A GAME FOR BAIRE'S GRAND THEOREM}

\author{Lorenzo Notaro}
\address{Università degli Studi di Torino, Dipartimento di Matematica ``G. Peano", Via Carlo Alberto 10, 10123 Torino, Italy}
\curraddr{}
\email{lorenzo.notaro@unito.it}
\date{}
\thanks{The author would like to thank Rapha\"{e}l Carroy and Alessandro Andretta for their careful readings and useful suggestions, which helped shaping this article. The author would like to acknowledge INDAM for the financial support. This research was also partially supported by the project PRIN 2017 ``Mathematical Logic: models, sets, computability", prot. 2017NWTM8R}

\subjclass[2020]{Primary 03E15, Secondary 26A21, 91A44}
\keywords{Baire class 1, Baire characterization theorem, topological game, reduction game}

\begin{abstract}
Generalizing a result of Kiss, we provide a game that characterizes Baire class $1$ functions between arbitrary separable metrizable spaces. We show that the determinacy of our game  is equivalent to a generalization of Baire's grand theorem, and that both these statements hold under $\mathsf{AD}$ and in Solovay's model.
\end{abstract}

\maketitle

\section{Introduction}
\label{sec:1}
A Polish space is a separable, completely metrizable topological space. Given two topological spaces $X,Y$, a function $f:X\rightarrow Y$ is said to be Baire class $1$ if, for every open subset $V$ of $Y$, the pre-image $f^{-1}(V)$ is an $F_\sigma$ subset of $X$, i.e. a countable union of closed subsets. If $X$ is metrizable, then the open subsets of $X$ are also $F_\sigma$ subsets. All continuous functions with metrizable domain are Baire class $1$.

A classical result concerning this class of functions is the following theorem of Baire --- known as Baire's grand theorem --- which provides a characterization of Baire class $1$ functions from a Polish space to a separable metrizable space (e.g. see \cite{CDST}*{Theorem 24.15}).
\begin{theorem*}[Baire]
\label{BGT}
Let $X$ be a Polish space, $Y$ a separable metrizable space, and $f: X \rightarrow Y$. Then the following are equivalent:
\begin{enumerate}[label=\arabic*)]
    \item $f$ is Baire class $1$
    \item $f{\upharpoonright} K$ has a point of continuity for every compact $ K\subseteq X$
\end{enumerate}
\end{theorem*}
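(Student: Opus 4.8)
The plan is to prove the two implications separately: $(1)\Rightarrow(2)$ is routine Baire‑category bookkeeping, while $(2)\Rightarrow(1)$ carries all the weight. Throughout fix a compatible metric $d$ on $Y$ and, for a map $g$ into $Y$, write $\operatorname{osc}_g(x)=\inf\{\operatorname{diam}_d g(U):U\ni x\text{ open}\}$, so that $g$ is continuous at $x$ iff $\operatorname{osc}_g(x)=0$ and each $\{x:\operatorname{osc}_g(x)\ge\varepsilon\}$ is closed.

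\emph{Proof of $(1)\Rightarrow(2)$.} I would first record two soft facts: the restriction of a Baire class $1$ function to any subspace is again Baire class $1$ (intersect the $F_\sigma$ preimages with the subspace), and a compact metrizable space is a Baire space, as is every open subset of it. Hence it suffices to prove that a Baire class $1$ map $g$ from a nonempty Baire space $B$ into $Y$ has a continuity point. For $\varepsilon>0$ the closed set $\{x:\operatorname{osc}_g(x)\ge\varepsilon\}$ has empty interior: if a nonempty open $U\subseteq B$ were contained in it, then covering $Y$ by countably many open sets $W_i$ of $d$‑diameter $<\varepsilon$ gives $U=\bigcup_i(U\cap g^{-1}(W_i))$; since $U$ is Baire and each $U\cap g^{-1}(W_i)$ is $F_\sigma$, one of them is non‑meager, so one of its closed pieces has nonempty interior $U'$, and then $g(U')\subseteq W_i$ has $d$‑diameter $<\varepsilon$, contradicting $U'\subseteq\{x:\operatorname{osc}_g(x)\ge\varepsilon\}$. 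Therefore $\{x:\operatorname{osc}_g(x)>0\}=\bigcup_n\{x:\operatorname{osc}_g(x)\ge 1/n\}$ is meager, and any point outside it (there is one, $B$ being nonempty and Baire) is a continuity point. Applying this to $g=f\upharpoonright K$ for nonempty compact $K$ yields $(2)$.

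\emph{Proof of $(2)\Rightarrow(1)$, by contraposition.} Assume $f$ is not Baire class $1$; I want a nonempty compact $K\subseteq X$ with $f\upharpoonright K$ nowhere continuous. The first step is a reformulation: \emph{$f$ is Baire class $1$ iff for every pair of closed sets $E_0,E_1\subseteq Y$ with $d(E_0,E_1)>0$ there is an $F_\sigma$ set $S\subseteq X$ with $f^{-1}(E_0)\subseteq S$ and $S\cap f^{-1}(E_1)=\emptyset$.} The nontrivial direction uses that an open $V\subsetneq Y$ is the increasing union of the open sets $G_n=\{y:d(y,Y\setminus V)>1/n\}$, each with $\overline{G_n}\subseteq V$ and $d(\overline{G_n},Y\setminus V)>0$; applying the hypothesis to $(\overline{G_n},Y\setminus V)$ gives $F_\sigma$ sets $S_n$ with $f^{-1}(\overline{G_n})\subseteq S_n\subseteq f^{-1}(V)$, whence $f^{-1}(V)=\bigcup_n S_n$. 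So I may fix closed $E_0,E_1\subseteq Y$ with $\delta:=d(E_0,E_1)>0$ for which $A_0:=f^{-1}(E_0)$ and $A_1:=f^{-1}(E_1)$ are \emph{not} separated by any $F_\sigma$ set; note $A_0\cap A_1=\emptyset$ and $d(f(x),f(x'))\ge\delta$ whenever $x\in A_0$, $x'\in A_1$. Next I would extract a nonempty closed set on which both $A_0$ and $A_1$ are dense, via a Cantor--Bendixson‑type derivative: for closed $C\subseteq X$ put $C'=\overline{A_0\cap C}\cap\overline{A_1\cap C}$, which equals $C$ minus the union of all basic open $W$ with $W\cap C\cap A_0=\emptyset$ or $W\cap C\cap A_1=\emptyset$, hence is closed. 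Iterating from $C_0=X$ (taking intersections at limits) gives a decreasing chain which, $X$ being hereditarily Lindel\"of, stabilizes at a countable stage; call the stable value $C$. If $C=\emptyset$, then $X$ is partitioned by the sets $C_\alpha\setminus C_{\alpha+1}$, each a countable union of sets $W\cap C_\alpha$ with $W$ open (hence $F_\sigma$ in $X$), and collecting the pieces that miss $A_1$ yields an $F_\sigma$ set containing $A_0$ and disjoint from $A_1$ — contradicting the choice of $E_0,E_1$. So $C\neq\emptyset$; $C'=C$ says exactly that $A_0\cap C$ and $A_1\cap C$ are dense in $C$, and in particular $C$ is perfect (an isolated point would lie in $A_0\cap A_1$) and, being closed in $X$, Polish.

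Finally, inside $C$ I would build a Cantor scheme of nonempty relatively open sets $N_s$ ($s\in 2^{<\omega}$) — nested, with closures of siblings disjoint and $\operatorname{diam} N_s\le 2^{-|s|}$ — together with designated points $a_s\in A_0\cap C\cap N_s$ and $b_s\in A_1\cap C\cap N_s$, maintained so that $a_s$ is handed down unchanged to the left child of $s$ and $b_s$ to the right child, the other designated point of each child being chosen afresh using density. Then $K:=\bigcap_n\bigcup_{|s|=n}\overline{N_s}$ is a nonempty compact (Cantor) subset of $C$; since $a_s$ persists as the designated $A_0$‑point of every node on the branch obtained by going always left after $s$, it lies in $\overline{N_t}$ for all such $t$, so $a_s\in K$, and likewise $b_s\in K$. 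Thus $\overline{N_s}\cap K$ contains the two points $a_s,b_s$ of $K$ with $d(f(a_s),f(b_s))\ge\delta$; as the sets $\overline{N_t}\cap K$ ($t$ ranging along the branch of a given point of $K$) form a neighbourhood base, $f\upharpoonright K$ has oscillation $\ge\delta$ everywhere, hence no continuity point, contradicting $(2)$. I expect this last step to be the real obstacle: dropping into an \emph{arbitrary} Cantor subset of $C$ is useless, since $f$ could be constant there, so the delicate part is the bookkeeping that forces honestly $f$‑separated members of $K$ into every basic neighbourhood — handled above by re‑using the designated points along the two extreme branches out of each node. The derivative step needs some care too, mainly to see the process terminates at a countable stage (so that the leftover really is $F_\sigma$) and that non‑emptiness of the kernel is precisely non‑separability of $A_0$ and $A_1$.
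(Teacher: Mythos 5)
Your proof is correct. Note that the paper itself does not prove this theorem --- it is quoted from the literature (Kechris, Theorem 24.15) --- so the comparison is with the standard argument and with the closely related argument the paper gives for Theorem \ref{equiv2}. Your $(1)\Rightarrow(2)$ is the usual Baire-category/oscillation argument. Your $(2)\Rightarrow(1)$ follows the classical Baire--Hurewicz route: reduce non-Baire-class-1-ness to a pair $A_0=f^{-1}(E_0)$, $A_1=f^{-1}(E_1)$ with $d(E_0,E_1)>0$ that cannot be separated by an $F_\sigma$ set (the positive distance, rather than mere disjointness, is exactly the right strengthening, since it is what makes the oscillation on $K$ bounded below by $\delta$ at the end), then produce a closed set on which both preimages are dense via a countable-length derivative, and finally build a Cantor scheme whose designated $A_0$- and $A_1$-points persist along the two extreme branches so that they land in $K$ and witness oscillation $\ge\delta$ in every basic relative neighbourhood. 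This is in effect a direct proof, for Polish domains, of the weak Hurewicz-type separation statement that the paper isolates as $\mathsf{WHSP}$ and then uses as a black box in the direction $2)\Rightarrow 1)$ of Theorem \ref{equiv2}; your derivative argument (stabilization at a countable stage, the leftover being $F_\sigma$ and giving the separating set) and the clopen trace $\overline{N_t}\cap K$ argument are both sound. Two harmless omissions worth fixing in a write-up: fix a \emph{complete} compatible metric on $X$ (you only fixed one on $Y$), since the compactness of $K=\bigcap_n\bigcup_{|s|=n}\overline{N_s}$ comes from ``closed plus totally bounded'' in a complete metric; and dispose of the trivial cases $V=Y$ (so $Y\setminus V=\emptyset$) and empty compact $K$ by convention.
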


Actually, the separability hypothesis on $X$ can be avoided \citelist{\cite{Kuratowski}*{Ch. II, \S 31, X}\cite{GrandBaire}*{Corollary 1}}, but in this article we are interested in the separable case. 

We note that Baire class $1$ functions have been, and still are, sometimes defined as pointwise limits of continuous functions --- e.g. \cites{Hausdorff,MR3819690, MR377482}, Baire himself originally stated his grand theorem for pointwise limits of continuous real functions \cite{Baire}. This definition and ours are equivalent only under certain hypotheses --- e.g. see \citelist{\cite{CDST}*{Theorem 24.10} \cite{Karlova2}}. 

In this article, we study the generalization of \hyperref[BGT]{Baire's grand theorem} in which the domain's hypothesis is weakened from Polish to separable metrizable, and its relationship with the determinacy of a two-player game. The use of infinite two-player, perfect information games to characterize certain classes of functions has a long and established history --- e.g. \cites{Wadge,Duparc,Andretta,Carroy1,Kiss,Camerlo, Limsup, balcerzak}, see \cite{MottoRos} for a detailed introduction on this subject.

In \hyperref[sec:2]{Section 2}, we define our game $G(f)$, where $f$ is a function between separable metrizable spaces. We prove that Player II has a winning strategy in $G(f)$ if and only if $f$ is Baire class $1$ (Theorem \ref{PlayerII}). Then we show that Player I has a winning strategy in $G(f)$ if and only if there is a compact $ K\subseteq X$ such that $f{\upharpoonright} K$ has no points of continuity (Theorem \ref{PlayerI}). 

In \hyperref[sec:3]{Section 3}, we discuss the determinacy of our game. We start by observing that the determinacy of our game for every function is equivalent to \hyperref[GBT]{$\mathsf{GBT}$}, the generalization of \hyperref[BGT]{Baire's grand theorem} in which the domain's hypothesis is weakened from Polish to separable metrizable (Corollary \ref{equiv1}). We note that $\mathsf{AC}$ and $\mathsf{GBT}$ are mutually inconsistent (Proposition \ref{ZFC}). Then we show that  \hyperref[GBT]{$\mathsf{GBT}$} is equivalent to a separation property coming from Descriptive Set Theory (Theorem \ref{equiv2}) and that both these statements hold under $\mathsf{AD}$, the axiom of determinacy, and in Solovay's model.

\section{The game}
\label{sec:2}
Given $X$ a topological space and $(U_n)_{n\in\mathbb{N}}$ a sequence of open subsets of $X$, we say that $(U_n)_{n\in\mathbb{N}}$ is \emph{convergent} if it is decreasing with respect to $\subseteq$, and if it is a local basis of some  $x \in X$. In that case we write $\lim_{n\rightarrow \infty} U_n = x$.

\begin{definition}\label{definition}
    Let $X,Y$ be separable metrizable spaces and let $f:X\rightarrow Y$. In our game $G(f)$, at the $n$th round, Player I plays a nonempty open subset $U_n$ of $X$,  and then Player II plays $y_n\in \text{ran}(f)$,
\begin{center}
\begin{tabular}{cccccccc}
I & $U_0$ & & $U_1$ & & $U_2$ & & ... \\
II & & $y_0$ & & $y_1$ & & $y_2$ & ...\\
\end{tabular}
\end{center}
\noindent with the rule: $U_{n+1}\subseteq U_{n}$ for each $n\in\mathbb{N}$. At the end of a game run, Players I and II have produced a sequence $(U_n)_{n\in\mathbb{N}}$ of nonempty open subsets of $X$ and a sequence $(y_n)_{n\in\mathbb{N}}$ in $\text{ran}(f)$, respectively. Player II \emph{wins} the run if either the sequence $(U_n)_{n\in\mathbb{N}}$ is not convergent or it converges to an $x \in X$ and $(y_n)_{n\in\mathbb{N}}$ converges to $f(x)$.
\end{definition}

 This game is an elaboration of Kiss' game \cite{Kiss} and a further generalization of Duparc's eraser game \cite{Duparc}. 

Since we use trees and operations over finite sequences throughout, we briefly recall their classical definition and notation. Let $X$ be a nonempty set. We denote by $X^{<\mathbb{N}}$ the set of finite sequences of elements of $X$, with $\emptyset$ being the empty sequence. Let $u\in X^{<\mathbb{N}}$ and $v \in X^{\le\mathbb{N}}$. We write $u \sqsubseteq v$ when $u$ is an initial segment of $v$, and we say that $v$ \emph{extends} $u$. We call $u^\smallfrown v$ the concatenation of $u$ and $v$. For each $n \le \text{lenght}(u)$, we let $u {\upharpoonright} n$ be the initial segment $\langle u(0), u(1), \dots, u(n-1)\rangle$ if $n>0$; otherwise, we let it be the empty sequence. Given an $x \in X$, we write $u^\smallfrown x$ to denote the finite sequence $u ^\smallfrown \langle x \rangle$, where $\langle x \rangle$ is the sequence of length $1$ containing only $x$. Sometimes we use the notation {$\vec{s}$} to denote a sequence. A \emph{tree} $T$ on $X$ is a subset of $X^{<\mathbb{N}}$ closed under initial segments. A tree is said to be \emph{pruned} if all its nodes have a proper extension. Given a tree $T$, we denote by $[T]$ the set $\{r \in X^\mathbb{N} \mid \forall n \  r {\upharpoonright} n \in T\}$, whose elements are called \emph{branches} of $T$. The family $\{N_s \mid s \in \mathbb{N}^{<\mathbb{N}}\}$ with $N_s = \{r \in \mathbb{N}^\mathbb{N} \mid s \sqsubseteq r\}$, is the standard basis of the Baire space.

A \emph{partial play} of Player I (resp. Player II) in $G(f)$ is a nonempty finite sequence of open subsets of $X$ decreasing with respect to $\subseteq$ (resp.\ a finite sequence of elements of $\text{ran}(f)$). A \emph{strategy} for Player I (resp. Player II) in $G(f)$ is a function that maps each partial play of Player II (resp. Player I) to an open subset of $X$ (resp. to an element of $\text{ran}(f)$). A strategy $\sigma$ for Player I is \emph{winning} if for every infinite sequence $\vec{y} = (y_n)_{n\in\mathbb{N}}$ of elements of $\text{ran}(f)$, Player I wins the run of the game in which at each turn $n$ Player I plays $\sigma(\vec{y} \upharpoonright n)$ and Player II plays $y_n$. Winning strategies for Player II are analogously defined.

Kiss \cite{Kiss}*{Theorem 1} used his game to characterize Baire class $1$ functions between separable complete metric spaces. The following theorem generalizes this result by providing an analogous characterization for Baire class $1$ functions  between arbitrary separable metrizable spaces.
\begin{theorem}\label{PlayerII}
Let $X, Y$ be separable metrizable spaces and $f:X\rightarrow Y$. Then Player II has a winning strategy in $G(f)$ if and only if $f$ is Baire class $1$. 
\end{theorem}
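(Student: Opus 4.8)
The plan is to prove both directions directly.

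\medskip

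\noindent\textbf{From Baire class $1$ to a winning strategy for Player II.} Suppose $f$ is Baire class $1$. Since $Y$ is separable metrizable, fix a countable basis $(V_k)_{k\in\mathbb{N}}$ for $Y$ together with a compatible metric. The idea is that Player II watches the shrinking sequence $U_0 \supseteq U_1 \supseteq \cdots$ chosen by Player I and, at each stage, commits to a value $y_n$ that is ``locally correct'' for $f$ on $U_n$ up to precision $2^{-n}$. Concretely, because $f$ is Baire class $1$, each $f^{-1}(V_k)$ is $F_\sigma$, so one can stratify $X$ into $F_\sigma$-in-$X$ pieces on which $f$ varies by less than $2^{-n}$; more usefully, I would use the characterization via oscillation: for Baire class $1$ functions the set $\{x : \mathrm{osc}_f(x) \ge \varepsilon\}$ is closed, but here we need something that lets Player II react to arbitrary open sets, not just work on compacta. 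The cleanest route is this: for each $n$, there is a countable closed cover $(F^n_j)_{j}$ of $X$ such that $\mathrm{diam}(f(F^n_j)) \le 2^{-n}$ — this is a standard consequence of $f$ being Baire class $1$ into a separable metric space. Player II's strategy: at round $n$, given $U_n$, look at the least $j$ such that $U_n \cap F^n_j \ne \emptyset$ in a bookkeeping that eventually stabilizes on the ``right'' $j$ if the $U_n$ converge, and play some $y_n \in f(U_n \cap F^n_j)$. If the $U_n$ do not converge, Player II wins automatically. If $U_n \to x$, then for every $n$, $x$ eventually lies in every $U_n$; one shows the chosen pieces force $y_n \to f(x)$ because, for large $n$, any piece $F^n_j$ meeting all sufficiently small $U_m$ must contain $x$ (using that the $U_m$ form a neighborhood basis of $x$ and the $F^n_j$ are closed), hence $d(y_n, f(x)) \le \mathrm{diam}(f(F^n_j)) \le 2^{-n}$. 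Making the bookkeeping genuinely stabilize is the delicate point and will need care — this is the main obstacle on this side.

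\medskip

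\noindent\textbf{From a winning strategy for Player II to Baire class $1$.} Conversely, suppose $\sigma$ is a winning strategy for Player II. I want to show $f^{-1}(V)$ is $F_\sigma$ for every open $V \subseteq Y$; equivalently, that $f^{-1}(C)$ is $G_\delta$ for every closed $C \subseteq Y$, or — working with a countable basis — it suffices to show $\{x : \mathrm{osc}_f(x) \ge \varepsilon\}$ is separated appropriately, but the direct approach is cleaner. Fix a closed set $C \subseteq Y$ and shrinking open neighborhoods $W_m \downarrow C$ with $\bigcap_m W_m = C$ (possible since $Y$ is metrizable). For each finite decreasing sequence $\vec{U} = (U_0, \ldots, U_{n-1})$ of nonempty open subsets of $X$ that is consistent with $\sigma$ (i.e., Player II's responses are dictated by $\sigma$), I consider the ``current guess'' $y_{n-1} = \sigma(\ldots)$. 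Define, for each $m$, the set $A_m$ of points $x$ such that there is a run of the game consistent with $\sigma$ in which the $U_n$ converge to $x$ and, from some point on, $y_n \in W_m$ — but this needs to be turned into a Borel (indeed $F_\sigma$) description of $f^{-1}(C)$. The mechanism: $x \in f^{-1}(C)$ iff $f(x) \in C$ iff for every $m$, $f(x) \in W_m$. Because $\sigma$ is winning, if $U_n \to x$ then $y_n \to f(x)$, so for each $m$ there is $N$ with $y_n \in W_m$ for all $n \ge N$. I would set $F_m = \{x \in X : $ for every open basis neighborhood sequence $U_n \to x$ consistent with $\sigma$, eventually $y_n \in \overline{W_m}\}$ and argue $f^{-1}(C) = \bigcap_m F_m$ with each $F_m$ closed — or rather, build the $F_\sigma$ set directly as a countable union over finite consistent positions of closed sets of the form $\{x : x \in U_{n-1}, \sigma\text{-value} \in \overline{W_m}\}$-closures. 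The key lemma to extract is: for each $x$ and each $m$, there is a finite $\sigma$-consistent position $\vec{U}$ with $x \in U_{n-1}$, $\mathrm{diam}(U_{n-1})$ small, and $\sigma$'s response landing in $W_m$ — and moreover this ``committing'' position can be chosen to depend semicontinuously on $x$. Turning the game-theoretic ``eventually'' into a genuine $F_\sigma$ formula, using that a strategy only sees finite information, is the real content here, and is where most of the work goes.

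\medskip

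\noindent\textbf{Structure and expected obstacle.} So the proof has two halves: (⇐) construct $\sigma$ from a Baire-class-$1$ decomposition of $f$ via closed pieces of small image-diameter, with a stabilizing bookkeeping device; (⇒) decode a winning $\sigma$ into an $F_\sigma$ formula for preimages of open sets by exploiting finiteness of game positions plus the metrizability of $X$ and $Y$ to get the requisite closed approximants. I expect the harder direction is (⇐): ensuring the value Player II commits to at round $n$ is simultaneously (a) determined by the finite information available, (b) within $2^{-n}$ of $f(x)$ whenever the play converges to $x$, and (c) compatible with being forced to commit before knowing whether the play will converge. Duparc's eraser game and Kiss' argument handle the complete-metric case; the new subtlety here is that $X$ need not be Polish, so one cannot appeal to compactness or completeness of $X$ and must work purely with the open-set combinatorics and the $F_\sigma$ structure of preimages. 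I would therefore isolate as a preliminary lemma the statement that $f : X \to Y$ between separable metrizable spaces is Baire class $1$ iff for every $\varepsilon > 0$ there is a countable closed (in $X$) cover of $X$ by sets of $f$-image-diameter $\le \varepsilon$, and build both directions on top of that.
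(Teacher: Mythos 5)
Both directions of your outline stop exactly at the points where the real work lies, and the concrete devices you do propose are not correct as stated. On the $(\Longleftarrow)$ side, your preliminary lemma is fine and is a genuinely different starting point from the paper (which instead invokes the Lebesgue--Hausdorff--Banach theorem to write $f$ as a pointwise limit of continuous functions into $\mathbb{R}^{\mathbb{N}}$): covering $Y$ by countably many open sets of diameter $\le 2^{-n}$ and splitting their preimages into closed pieces does give, for each $n$, a countable closed cover $(F^n_j)_j$ with $\mathrm{diam}(f(F^n_j))\le 2^{-n}$. But the strategy ``at round $n$ play some $y_n\in f(U_n\cap F^n_j)$ for the least $j$ with $U_n\cap F^n_j\neq\emptyset$'' fails: the least level-$n$ piece meeting the small set $U_n$ need not contain the limit point $x$, and since $f$ is not continuous its values on $U_n\cap F^n_j$ can then be far from $f(x)$ no matter how small $U_n$ is (already for the indicator of a single point one can enumerate the covers so that the chosen piece always misses $x$ and Player II loses); moreover each round uses a fresh cover, so nothing in this rule can stabilize. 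The fact you cite (a closed set meeting every $U_m$ must contain $x$) is true, but you never supply a bookkeeping that forces the chosen pieces to meet \emph{all} sufficiently small $U_m$. A working repair is a nested least-index scheme: at round $k$ set $j^k_1=\min\{j: U_k\cap F^1_j\neq\emptyset\}$, $j^k_2=\min\{j: U_k\cap F^1_{j^k_1}\cap F^2_j\neq\emptyset\}$, and so on up to level $k$, and play a value near $f(U_k\cap F^1_{j^k_1}\cap\dots\cap F^k_{j^k_k})$; each $j^k_i$ is nondecreasing in $k$ and bounded by the least index of a piece containing $x$, so it stabilizes, the stable pieces contain $x$, and $y_k\to f(x)$. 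You explicitly flag this stabilization as ``the main obstacle'' and leave it open, so this direction is incomplete as written. (There is also a minor choice issue in selecting $y_n\in f(U_n\cap F^n_j)$ for continuum many positions; the paper avoids it by always playing points of a fixed countable dense subset of $\mathrm{ran}(f)$.)

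The $(\Longrightarrow)$ direction has the same problem. Your sets $F_m=\{x:\text{every }\sigma\text{-consistent play converging to }x\text{ has }y_n\in\overline{W_m}\text{ eventually}\}$ do satisfy $f^{-1}(C)=\bigcap_m F_m$, but you give no argument that $F_m$ is closed (or even Borel), and it is not clear that it is; and the fallback you sketch --- a union over finite consistent positions of closures of sets of the form $\{x\in U_{n-1}:\sigma\text{-value}\in\overline{W_m}\}$ --- is not the right set, since the $\sigma$-value depends only on the position (so that set is $U_{n-1}$ or $\emptyset$) and taking closures of the $U_{n-1}$ overshoots the preimage. Your ``key lemma'' is also too weak: a single position whose response lands in $W_m$ does not help, because later responses along the play may leave $W_m$; what is needed is a position $s$ with $x\in U_s$ such that \emph{every} further position containing $x$ still gets a response in the target set. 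The paper achieves exactly this by restricting to a countable scheme $(U_s)_{s\in\mathbb{N}^{<\mathbb{N}}}$ with $U_\emptyset=X$, $U_s=\bigcup_n U_{s^\smallfrown n}$, $\mathrm{diam}(U_s)\le 2^{-\mathrm{length}(s)}$, setting $y_s=\sigma(U_{s{\upharpoonright}0},\dots,U_s)$, proving via the pruned trees $T_x=\{s: x\in U_s\}$ that for every $x$ and neighborhood $V$ of $f(x)$ some $s\in T_x$ has $y_t\in V$ for all $t\in T_x$ extending $s$, and then writing $f^{-1}(V)=\bigcup_n\bigcup_s\bigl(U_s\setminus\bigcup\{U_t: t\sqsupseteq s,\ y_t\notin V_n\}\bigr)$, each term being an open set minus an open set and hence $F_\sigma$ in a metrizable space. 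You correctly identify ``turning the game-theoretic eventually into an $F_\sigma$ formula'' as the real content, but you do not carry it out, so this direction too has a genuine gap.
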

\begin{proof}
$(\Longleftarrow)$: Assume that the function $f$ is Baire class $1$. As every separable metrizable space embeds into $\mathbb{R}^\mathbb{N}$ (i.e. the space of infinite sequences of real numbers with the product of the Euclidean topology), we can assume $Y\subseteq \mathbb{R}^\mathbb{N}$ without loss of generality. 

By a theorem of Lebesgue, Hausdorff and Banach \cite{CDST}*{Theorem 24.10} there exists a sequence $(f_n)_{n\in\mathbb{N}}$ of continuous functions from $X$ to $\mathbb{R}^\mathbb{N}$ (with range not necessarily in $Y$) converging pointwise to $f$. Fix such a sequence, and also fix a compatible metric $d$ on $\mathbb{R}^\mathbb{N}$ and a sequence $(q_n)_{n\in\mathbb{N}}\subset \text{ran}(f)$ dense in $\text{ran}(f)$. Given two nonempty $A,B\subseteq \mathbb{R}^\mathbb{N}$, we let $d(A,B)$ be $\inf \{d(z_0,z_1) \mid z_0 \in A, z_1 \in B\}.$

In the next paragraph, we define by induction a map $\sigma^\star$ that maps partial plays of Player I into $Y\times \mathbb{N}$. Then $\pi_Y\circ \sigma^\star$ is shown to be a winning strategy for Player II, where $\pi_Y, \pi_\mathbb{N}$ are the canonical projections. We denote by $\sigma^\star_Y$ and $\sigma^\star_\mathbb{N}$ the functions $\pi_Y\circ \sigma^\star$ and $\pi_\mathbb{N}\circ \sigma^\star$, respectively.

Here is the definition on $\sigma^\star$ by induction on the lengths of Player I's partial plays: set $\sigma^\star ( U_0 ) = (q_0, 0)$ for each nonempty open subset $U_0$ of $X$; fix $k\in\mathbb{N}$, suppose that we have defined $\sigma^\star$ for all Player I's partial plays of length up to $k$ and consider a partial play $\vec{U}^\smallfrown U_{k}$ of length $k+1$, then
\begin{enumerate}[label={\arabic*)}]
    \item if there is an $n > \sigma^\star_\mathbb{N}(\vec{U})$ such that $\text{diam}(f_{n}[U_k]) \le  2^{-n}$: fix an $n$ satisfying the condition and an $m$ such that $d(q_m, f_n[U_k]) \le d(\text{ran}(f), f_{n}[U_{k}])+ 2^{-n}$; set $\sigma^\star(\vec{U}^\smallfrown U_k) = (q_m, n)$.
    \item otherwise: we set $\sigma^\star(\vec{U}^\smallfrown U_{k}) = \sigma^\star(\vec{U})$.
\end{enumerate} 

\noindent We now show that $\sigma^\star_Y$ is a winning strategy for Player II in $G(f)$. Fix an infinite play $(U_k)_{k\in\mathbb{N}}$ of Player I in $G(f)$. If $(U_k)_{k\in\mathbb{N}}$ is not convergent, then Player II wins. Assume that $(U_k)_{k\in\mathbb{N}}$ converges to an $x \in X$, and set $y_k = \sigma^\star_Y (U_0,\dots, U_k), \ n_k = \sigma^\star_\mathbb{N} (U_0,\dots, U_k)$ for each $k \in \mathbb{N}$. We now need to show $\lim_{k\rightarrow \infty} y_k = f(x)$.

\begin{claim}
The sequence $(n_k)_{k\in\mathbb{N}}$ is nondecreasing and unbounded in $\mathbb{N}$.
\end{claim}
\begin{proof}
The fact that $(n_k)_{k\in\mathbb{N}}$ is nondecreasing is a direct consequence of the definition of $\sigma^\star$. Next, note that, for all $n \in \mathbb{N}$, the diameters of the sets in the sequence $(f_n[U_k])_{k\in\mathbb{N}}$ converge to 0, as $f_n$ is continuous and $(U_k)_{k\in\mathbb{N}}$ is a local basis of $x$, decreasing with respect to $\subseteq$. 

Fix a $k \in \mathbb{N}$ and an $n > n_k$. By the previous observation, there exist a $k' > k$ such that $\text{diam}(f_{n}[U_{k'}]) \le 2^{-n}$. Fix one such $k'$, there are two cases: either $n_{k'-1} > n_k$ or $n_{k'-1} = n_k$. In the latter case, the first condition in the inductive definition of $\sigma^\star$ happens at the $k'$-th round, hence $n_{k'} > n_{k'-1} = n_k$. In either case, $n_{k'} > n_k$. We just proved that for every $k$ there is a $k'>k$ such that $n_{k'} > n_k$, therefore $(n_k)_{k\in\mathbb{N}}$ is unbounded.
\end{proof}
Let $\bar{k}$ be the least $k$ such that $n_k > 0$. 
\begin{claim}
    For all $k \ge \Bar{k}, \ d(y_k, f_{n_k}(x)) \le d( f(x), f_{n_k}(x)) + 2^{1-n_k}$.
\end{claim}
\begin{proof}
  Fix a $k \ge \Bar{k}$ and pick the smallest $l \le k$ such that $n_l = n_k$. Note that $y_k = y_l$, as from the $l$-th round to the $k$-th $\sigma^\star$ does not change its response. From the minimality of $l$ it follows that the first condition of the inductive definition of $\sigma^\star$ happens at the $l$-th round, therefore $d(y_l, f_{n_l}[U_l]) \le d(\text{ran}(f), f_{n_l}[U_l]) + 2^{- n_l}$ and $\text{diam}(f_{n_l}[U_l]) \le 2^{-n_l}$. Since we assumed $x = \lim_{n \rightarrow \infty} U_n$, $x$ belongs to $U_l$ and $f_{n_l}(x)$ belongs to $f_{n_l}[U_l]$, hence $d(\text{ran}(f), f_{n_l}[U_l]) \le d(f(x), f_{n_l}(x))$, and, overall, $$d(y_l, f_{n_l}(x)) \le d(y_l, f_{n_l}[U_l]) + \text{diam}(f_{n_l}[U_l]) \le d(f(x), f_{n_l}(x)) +2^{1-n_l}.$$ As $n_l = n_k$ and $y_l = y_k$ we are done.
\end{proof}

Then, for each $k \ge \bar{k}$,
 \begin{align*}
    d(y_k, f(x)) \le d(f_{n_k}(x), f(x)) +  d(y_k, f_{n_k}(x))  \le 2 d(f_{n_k}(x), f(x)) + 2^{1-n_{k}}.
\end{align*}

\noindent Since $(n_k)_{k\in\mathbb{N}}$ is unbounded and the $f_n$'s converge pointwise to $f$, these inequalities imply that $(y_k)_{k\in\mathbb{N}}$ converges to $f(x)$ and therefore $\sigma^\star_Y$ wins the run. As $(U_k)_{k\in\mathbb{N}}$ was an arbitrary play of Player I, we have shown that $\sigma^\star_Y$ is a winning strategy for Player II in $G(f)$.\vspace{1em}

$(\Longrightarrow)$: Suppose that Player II has a winning strategy in $G(f)$, we show that the function $f$ is Baire class $1$.

Fix a winning strategy $\sigma$ for Player II in $G(f)$ and fix a compatible metric $d$ on $X$. As $X$ is separable, there exists a scheme $(U_s)_{s \in \mathbb{N}^{<\mathbb{N}}}$ of open subsets of $X$ satisfying the following properties:
\begin{enumerate}[label=\arabic*)]
    \item $U_{\emptyset} = X$.
    \item For all $s\in\mathbb{N}^{<\mathbb{N}},\  \bigcup_n U_{s^\smallfrown  n}= U_s$.
    \item For all $s\in\mathbb{N}^{<\mathbb{N}},\ \text{diam}(U_{s}) \le 2^{- \text{length}(s)}$.
\end{enumerate}
For each $s \in \mathbb{N}^{<\mathbb{N}}$ let $$y_s=\sigma(U_{s\upharpoonright 0},U_{s\upharpoonright 1},U_{s\upharpoonright 2},\dots, U_s).$$ In other words, $y_s$ is the response of Player II following $\sigma$ to the partial play $(U_{s\upharpoonright 0},\dots, U_s)$ of Player I. For every $x \in X$, let $T_x$ be the tree $\{s \in \mathbb{N}^{<\mathbb{N}} \mid x \in U_s\}$. It follows directly from properties 1) and 2) of the scheme that all such trees are nonempty and pruned.

\begin{claim}\label{Konig}
For all $x\in X$ and all open neighborhoods $V$ of $f(x)$, there is an $s \in T_x$ such that for all $t \in T_x$ if $ s \sqsubseteq t$ then $y_t \in V$.
\end{claim}
\begin{proof}
Assume for a contradiction that there is an $x\in X$ and a $V$, open neighborhood of $f(x)$, such that for all $s \in T_x$ there is a $t \in T_x$ that extends $s$ and such that $y_t \not\in V$. Then there exists a branch $r \in [T_x]$  such that $\{n \in \mathbb{N} \mid y_{r{\upharpoonright} n} \not\in V\}$ is infinite. Fix one and note that Player I wins in $G(f)$ by playing the sequence $(U_{r\upharpoonright n})_{n\in\mathbb{N}}$, as, by property 3) of the scheme, this sequence converges to $x$, while the corresponding play of Player II according to $\sigma$ does not converge to $f(x)$. Since we have assumed $\sigma$ to be a winning strategy for Player II, we have reached a contradiction.
\end{proof}
 
Fix $V$ open subset of $Y$ and a sequence $(V_n)_{n\in\mathbb{N}}$ of open subsets of $V$ such that $V = \bigcup_{n\in\mathbb{N}} V_n = \bigcup_{n\in\mathbb{N}} \overline{V_n}$. 
\begin{claim}
$$f^{-1}(V) = \bigcup_{n \in \mathbb{N}} \  \bigcup_{s \in \mathbb{N}^{<\mathbb{N}}} \left( U_s \setminus \bigcup\left\{U_t \mid t \sqsupseteq s \  \mathrm{and}  \ y_t \not\in V_n\right\}\right).$$
\end{claim}
\begin{proof}
Take  $x$ in the set on the right-hand side. By definition, there exists an $n \in \mathbb{N}$ and an $s \in \mathbb{N}^{<\mathbb{N}}$ such that $s \in T_x$ and for all $t \in T_x$ extending $s, \ y_t \in V_n$.  Fix a branch $r \in [T_x] \cap N_s$, then the sequence $(y_{r \upharpoonright k})_{k\in\mathbb{N}}$ is eventually in $V_n$, i.e. $y_{r \upharpoonright k} \in V_n$  for all $k$ greater than some $m \in \mathbb{N}$. As $(U_{r{\upharpoonright}k})_{k\in\mathbb{N}}$ converges to $x$ by property 3) of the scheme, and $\sigma$ is a winning strategy for Player II, we have $\lim_{k \rightarrow \infty}y_{r \upharpoonright k} = f(x)$, and therefore $f(x) \in \overline{V_n}\subseteq V$.

Now pick an $x$ in $f^{-1}(V)$. There must be an $n$ such that $f(x) \in V_n$. By Claim \ref{Konig}, there exists an $s \in \mathbb{N}^{<\mathbb{N}}$ such that for all $t \in T_x$ extending $s$, $y_t \in V_n$. But this means that $x$ belongs to the set on the right-hand side.
\end{proof}

Since the set on the right-hand side is an $F_\sigma$ subset of $X$ and $V$ was an arbitrary open subset of $Y$, we have shown that pre-images of open subsets of $Y$ by $f$ are $F_\sigma$ sets. So $f$ is Baire class $1$.
\end{proof}

\begin{theorem}\label{PlayerI}
Let $X, Y$ be separable metrizable spaces and $f:X\rightarrow Y$. Then Player I has a winning strategy in $G(f)$ if and only if there exists a compact set $ K\subseteq X$ such that $f{\upharpoonright} K$ has no points of continuity.
\end{theorem}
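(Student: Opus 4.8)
$(\Longleftarrow)$. Suppose $K\subseteq X$ is compact with $f{\upharpoonright}K$ nowhere continuous. The first step is to localize the discontinuity: since a compact metrizable space is Baire and the closed sets $A_m=\{x\in K\mid \mathrm{osc}_{f{\upharpoonright}K}(x)\ge 1/m\}$ cover $K$, some $A_m$ has nonempty interior in $K$, which yields a nonempty relatively open $V_0\subseteq K$ and an $\varepsilon>0$ with $\mathrm{osc}_{f{\upharpoonright}K}(x)\ge\varepsilon$ for every $x\in V_0$ (a property inherited by every nonempty relatively open subset of $V_0$). I would then have Player I follow a \emph{pivot-chasing} strategy: alongside the sets $U_n$ it maintains a point $p_n\in U_n\cap V_0$, preserving the invariants $\overline{U_{n+1}}\subseteq U_n$, $\text{diam}(U_n)\le 2^{-n}$, $\emptyset\ne U_n\cap K\subseteq V_0$, $p_n\in U_n$. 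After Player II reveals $y_n$, if $d(y_n,f(p_n))\ge\varepsilon/8$ Player I keeps $p_{n+1}=p_n$; otherwise, since $\text{diam}(f[U_n\cap K])\ge\mathrm{osc}_{f{\upharpoonright}K}(p_n)\ge\varepsilon$, Player I picks $p_{n+1}\in U_n\cap K$ with $d(y_n,f(p_{n+1}))>\varepsilon/4$; in both cases Player I then plays a small open ball around $p_{n+1}$ restoring all invariants.

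The invariants make $\bigcap_n\overline{U_n\cap K}$ a single point $x\in K$ contained in every $U_n$, so $(U_n)_n$ is forced to converge to $x$ regardless of Player II's moves, and it remains to show $(y_n)_n\not\to f(x)$. If the pivot changes only finitely often, then $p_n=x$ for all large $n$ and all such rounds are ``keep'' rounds, so $d(y_n,f(x))\ge\varepsilon/8$ eventually. If the pivot changes infinitely often, at rounds $n_0<n_1<\cdots$, then at each $n_i$ one has $d(y_{n_i},f(p_{n_i}))<\varepsilon/8$ while $d(y_{n_i},f(p_{n_i+1}))>\varepsilon/4$, and $f(p_{n_i+1})$ remains the pivot value up to round $n_{i+1}$, where $d(y_{n_{i+1}},f(p_{n_{i+1}}))<\varepsilon/8$; hence $d(y_{n_i},y_{n_{i+1}})>\varepsilon/8$ for all $i$, so $(y_n)_n$ is not Cauchy. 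In either case $(y_n)_n\not\to f(x)$, so Player I wins.

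$(\Longrightarrow)$. Given a winning strategy $\sigma$ for Player I, fix a countable $D=\{d_k\mid k\in\mathbb{N}\}$ dense in $\text{ran}(f)$ and, coding Player II's moves via $k\mapsto d_k$, read off the scheme $U_s=\sigma(\langle d_{s(0)},\dots,d_{s(|s|-1)}\rangle)$ for $s\in\mathbb{N}^{<\mathbb{N}}$, with $U_{s^\smallfrown k}\subseteq U_s$. Because $\sigma$ wins, for every $r\in\mathbb{N}^\mathbb{N}$ the play $(U_{r{\upharpoonright}n})_n$ must converge to some $x_r$ (so its diameters tend to $0$) and $(d_{r(n)})_n\not\to f(x_r)$; this makes $\phi\colon r\mapsto x_r$ a continuous map $\mathbb{N}^\mathbb{N}\to X$. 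The key observation is that $f\circ\phi$ cannot be locally constant anywhere in the scheme: if $f$ were constantly $w$ on $\phi[N_s]$, then picking $d_{k_j}\to w$ in $D$ and $r=s^\smallfrown\langle k_0,k_1,\dots\rangle$ would give a run of $G(f)$ in which $(U_{r{\upharpoonright}n})_n\to x_r$ while $(d_{r(n)})_n\to w=f(x_r)$, i.e. a win for Player II against $\sigma$ — impossible. Hence $\text{diam}(f[\phi[N_s]])>0$ for every $s$, so each $\phi[N_s]$ contains two points whose $f$-values are a definite distance apart.

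From here the plan is to perform a fusion producing a finitely branching pruned subtree $S\subseteq\mathbb{N}^{<\mathbb{N}}$ such that $\phi{\upharpoonright}[S]$ is injective — hence, $[S]$ being compact, a homeomorphism onto the compact set $K:=\phi[[S]]$ — while arranging that below every node of $S$ there survive branches of $[S]$ realizing the separated $f$-values produced above; one then reads off that $f{\upharpoonright}K$ has no point of continuity, so $K$ is as required. The main obstacle I anticipate is precisely this construction: one must simultaneously keep the tree finitely branching, keep $\phi$ injective along its branches (so that the oscillation of $f\circ\phi$ genuinely transfers to $f{\upharpoonright}K$ rather than collapsing), and prevent the separation of the witnessing values from degenerating to $0$ in the limit along a branch — a delicate simultaneous bookkeeping that is the technical heart of this implication.
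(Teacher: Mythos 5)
Your $(\Longleftarrow)$ direction is correct and is essentially the paper's argument (which follows Kiss): Baire category localizes the oscillation to a relatively open piece of $K$ with a uniform lower bound $\varepsilon$, and then Player I runs exactly the pivot-chasing strategy you describe, with the same two-case analysis (pivot eventually fixed versus infinitely many pivot changes giving a non-Cauchy sequence of Player II's moves). The only difference is cosmetic for the statement as given: the paper first extracts a countable set $Q$ with the same oscillation property (Lemma \ref{osc}) and centers all balls in $Q$, so that the strategy is built with only $\mathsf{AC}_\omega(\mathbb{R})$, whereas your pivot selections implicitly use more choice; under $\mathsf{AC}$ this is immaterial.

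The $(\Longrightarrow)$ direction, however, has a genuine gap, and you name it yourself: after defining the scheme $U_s$ and the continuous map $\phi$, all you actually prove is that $f\circ\phi$ is not constant on any $\phi[N_s]$, i.e. $\mathrm{diam}(f[\phi[N_s]])>0$ for each $s$. This is far weaker than what is needed: to get a compact $K$ on which $f$ has \emph{no} point of continuity you must produce, at every point of $K$, a positive lower bound on the oscillation, and your per-node separations are allowed to degenerate to $0$ along branches. The concluding ``fusion'' that is supposed to keep the tree finitely branching, keep $\phi$ injective, and keep the witnessing separations bounded away from $0$ along every branch is precisely the content of the implication, and it is only announced, not carried out; as written, nothing rules out, say, $f{\upharpoonright}\phi[N_s]$ being continuous and non-constant on every node, in which case your intermediate fact gives no compact set of the required kind. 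The paper avoids this bookkeeping entirely by a different route: it restricts from the outset to the compact finitely branching tree $S=\{s\mid s(n)\le n\}$, so that $K=\mathrm{ran}(\varphi)$ is automatically compact; it then \emph{transfers} Player I's winning strategy $\tau$ to a winning strategy in the restricted game $G(f{\upharpoonright}K)$, by replacing an arbitrary move $y_k$ of Player II with a nearest point $q(y_k,k)$ among the first $k+1$ elements of a countable dense subset of $\mathrm{ran}(f)$ (so that the coded play stays in $[S]$), and checks that if $(y_k)_k$ converges then so does $(q(y_k,k))_k$, to the same limit. By Theorem \ref{PlayerII}, $f{\upharpoonright}K$ is then not Baire class $1$, and the classical Baire grand theorem applied to the Polish space $K$ yields a compact $K'\subseteq K$ with $f{\upharpoonright}K'$ nowhere continuous. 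Some device of this kind --- either the transfer-plus-classical-theorem argument or a genuinely completed fusion with non-degenerating oscillation --- is what your proposal is missing.
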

To prove this theorem, we need the notion of pointwise oscillation of a function. Given $X$ a topological space, $Y$ a metric space and $f:A \rightarrow Y$ for some nonempty $A\subseteq X$, we define $\mathrm{osc}_f(x)$ for each $x \in X$ as $$\mathrm{osc}_{f}(x) = \inf \{\text{diam}(f(U \cap A)) \mid U\subseteq X \text{ open neighborhood of } x\}.$$
The function $\mathrm{osc}_{f}$ is upper semi-continuous, i.e. for every $\epsilon > 0$ the set $\{x \in X \mid \mathrm{osc}_{f}(x) \ge \epsilon\}$ is closed.
 
\begin{lemma}\label{osc}
Let $X,Y$ be separable metric spaces, $\epsilon > 0$ and $f:X\rightarrow Y$ such that $\mathrm{osc}_f(x) \ge \epsilon$ for all $x \in X$. Then there is a countable $Q \subseteq X$ such that $\mathrm{osc}_{f \upharpoonright Q}(x) \ge \epsilon$ for all $x \in X$. 
\end{lemma}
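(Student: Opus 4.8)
The plan is to build $Q$ by adjoining, to a fixed countable set, finitely-many-per-level witnesses showing that $f$ oscillates by almost $\epsilon$ on each member of a countable basis of $X$. First I would fix a countable basis $\mathcal{B} = \{B_n : n \in \mathbb{N}\}$ of $X$, which exists because $X$ is separable metric, hence second countable. The key observation is that the hypothesis $\mathrm{osc}_f(x) \ge \epsilon$ for every $x$ forces $\mathrm{diam}(f(U)) \ge \epsilon$ for every nonempty open $U \subseteq X$: picking any $x \in U$, the set $U$ is an open neighborhood of $x$, so $\mathrm{diam}(f(U)) \ge \inf\{\mathrm{diam}(f(V)) : V \text{ open}, x \in V\} = \mathrm{osc}_f(x) \ge \epsilon$. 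In particular $\mathrm{diam}(f(B_n)) \ge \epsilon$ for every nonempty $B_n$.

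Next, for each $n$ with $B_n \neq \emptyset$ and each $k \in \mathbb{N}$, since $\epsilon - 2^{-k} < \epsilon \le \mathrm{diam}(f(B_n)) = \sup\{d(f(a), f(b)) : a, b \in B_n\}$, I can choose points $a_{n,k}, b_{n,k} \in B_n$ with $d(f(a_{n,k}), f(b_{n,k})) > \epsilon - 2^{-k}$. Let $Q = \{a_{n,k}, b_{n,k} : n \in \mathbb{N},\ B_n \neq \emptyset,\ k \in \mathbb{N}\}$; this is a countable subset of $X$.

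Then I would check that $\mathrm{osc}_{f \upharpoonright Q}(x) \ge \epsilon$ for every $x \in X$. Fix $x$ and an arbitrary open neighborhood $U$ of $x$, and choose $n$ with $x \in B_n \subseteq U$. Then $B_n$ is nonempty, so for every $k$ we have $a_{n,k}, b_{n,k} \in B_n \cap Q \subseteq U \cap Q$, whence $\mathrm{diam}(f(U \cap Q)) \ge d(f(a_{n,k}), f(b_{n,k})) > \epsilon - 2^{-k}$. Taking the supremum over $k$ gives $\mathrm{diam}(f(U \cap Q)) \ge \epsilon$. Since $U$ was an arbitrary open neighborhood of $x$, it follows that $\mathrm{osc}_{f \upharpoonright Q}(x) = \inf\{\mathrm{diam}(f(U \cap Q)) : U \text{ open}, x \in U\} \ge \epsilon$, as desired.

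I do not expect a serious obstacle here; the argument is essentially bookkeeping. The only points that require care are: the reduction of the infimum in the definition of $\mathrm{osc}$ to basic open neighborhoods (handled by choosing $B_n$ with $x \in B_n \subseteq U$), and the fact that the supremum defining $\mathrm{diam}(f(B_n))$ need not be attained — this is exactly why one takes an approximating pair $a_{n,k}, b_{n,k}$ for each $k \in \mathbb{N}$ rather than a single pair, so that the oscillation of $f \upharpoonright Q$ is pushed up to (at least) $\epsilon$ in the limit.
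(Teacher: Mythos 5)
Your proof is correct, and it takes a genuinely different, more direct route than the one in the paper. You put the witnesses themselves into $Q$: for every nonempty basic open set $B_n$ and every accuracy level $k$ you choose a pair $a_{n,k},b_{n,k}\in B_n$ with $d_Y(f(a_{n,k}),f(b_{n,k}))>\epsilon-2^{-k}$, relying only on the observation that $\mathrm{osc}_f(x)\ge\epsilon$ everywhere forces $\mathrm{diam}(f(U))\ge\epsilon$ for every nonempty open $U$; then any neighborhood of any point contains some $B_n$, hence contains witness pairs from $Q$ of quality arbitrarily close to $\epsilon$. The paper's proof of Lemma \ref{osc} instead fixes a dense sequence $(y_n)$ in $Y$ and sets $Q=\bigcup_{n,m}Q_{n,m}$ with $Q_{n,m}$ countable and dense in $f^{-1}(B(y_n,2^{-m}))$; given an arbitrary witness pair $x_0,x_1$ in a neighborhood $U$, it approximates each $x_i$ by a point $q_i\in U\cap Q$ whose $f$-value lies in the same small ball $B(y_{n_i},2^{-m})$, paying a loss of $5\cdot 2^{-m}$ in the estimate. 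Your construction avoids this approximation step entirely (the control on $f$-values is unnecessary because the witnesses are already in $Q$), uses only second countability of $X$, and does not use the separability of $Y$ at all, so it is in fact marginally more general. Both arguments make only countably many choices of points of a separable metric space, so yours is equally compatible with Remark \ref{choice}, which notes that only $\mathsf{AC}_\omega(\mathbb{R})$ is needed in that section.
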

\begin{proof}
Let $d_Y$ be the metric on $Y$ and fix a sequence $(y_n)_{n\in\mathbb{N}}$ dense in $Y$. For each $n,m \in \mathbb{N}$, let $Q_{n,m}$ be a countable and dense subset of $f^{-1}(B(y_n,2^{-m}))$. We claim that the countable set $Q = \bigcup_{n,m} Q_{n,m}$ has the desired property.

Fix $x \in X$, $m \in \mathbb{N}$ and an open neighborhood $U$ of $x$. By assumption, there are $x_0, x_1 \in U$ such that $d_Y(f(x_0), f(x_1)) \ge \epsilon - 2^{-m}$. Let $n_0,n_1$ be such that $f(x_i) \in B(y_{n_i}, 2^{-m})$ for $i = 0,1$.  In particular, $U \cap f^{-1}(B(y_{n_i}, 2^{-m})) \neq \emptyset$, and therefore $U \cap Q_{n_i,m} \neq \emptyset$ for $i=0,1$. Pick $q_0, q_1$ in $U \cap Q_{n_0,m}$ and $U \cap Q_{n_1,m}$, respectively. Then,
\begin{align*}
    d_Y(f(q_0), f(q_1)) &\ge d_Y(f(x_0), f(x_1)) - d_Y(f(x_0), f(q_0)) - d_Y(f(x_1), f(q_1))\\ &\ge (\epsilon - 2^{-m}) - 2^{1-m} - 2^{1-m} = \epsilon - 5\cdot 2^{-m}.
\end{align*}
Indeed, for $i=0,1$, $f(x_i)$ and $f(q_i)$ both belong to $B(y_{n_i}, 2^{-m})$, and therefore their distance is less or equal to $2^{1-m}$.

We have shown that for each $x \in X$, for every open neighborhood $U$ of $x$ and for all $m$ there are $q_0, q_1 \in U \cap Q$ such that $d_Y(f(q_0),f(q_1))$ is greater than $\epsilon - 5\cdot 2^{-m}$. In particular, $\text{diam}(f(U \cap Q)) \ge \epsilon$. Hence, for all $x \in X, \ \mathrm{osc}_{f{\upharpoonright} Q} (x) \ge \epsilon$.  
\end{proof}

\begin{proof}[Proof of Theorem \ref{PlayerI}]
$(\Longleftarrow):$ Fix a compact set $ K\subseteq X$ such that $f{\upharpoonright}  K$ has no points of continuity. The winning strategy for Player I that we define is essentially the one defined by Kiss\footnote{Kiss' strategy, in turn, is based on the one defined by Carroy in \cite{Carroy1}*{Theorem 4.1}} in \cite{Kiss}*{\S 2}, the only difference being that we deal with a bit more care the amount of choice used in the construction (see Remark \ref{choice}).

Fix a compatible metric $d_X$ on $X$ and $d_Y$ on $Y$.
Since $f{\upharpoonright}  K$ has no points of continuity, it follows that $\mathrm{osc}_{f\upharpoonright  K}(x) > 0$ for every $x \in K$. In particular, $ K = \bigcup_n  K_n$, where $$ K_n = \left\{x \in  K \mid \mathrm{osc}_{f\upharpoonright K}(x) \ge \frac{1}{n}\right\}.$$

By Baire's category theorem, there is a nonempty open $U\subseteq X$ and an $n$ such that $ K_n \cap U =  K \cap U$. Let $C$ be the closure of $ K_n \cap U$ and $\epsilon = 1/n$, then $\mathrm{osc}_{f\upharpoonright C}(x) \ge \epsilon$ for every  $x \in C$.

By Lemma \ref{osc}, we know that there is a countable $Q\subseteq C$ such that $\mathrm{osc}_{f\upharpoonright Q}(x) \ge \epsilon$ for every $x \in Q$. Let $(q_n)_{n\in\mathbb{N}}$ be an enumeration of $Q$. We now define a winning strategy $\tau$ for Player I by induction on the lengths of Player II's partial plays. In particular, the map $\tau$ ranges among the open balls of $X$ centered in $Q$, i.e. open sets of the form $B(x,\rho)$ for some $x\in Q$ and radius $\rho>0$: first set $\tau(\emptyset) = B(q_0, 1)$ --- we are setting the first move of Player I; fix $k \in \mathbb{N}$, suppose that we have defined $\tau$ for all partial plays of Player II of lengths up to $k$ and consider the partial play $\vec{y}\ ^\smallfrown y_k$ of length $k+1$ with $B(q_{n_k}, \rho_k) = \tau(\vec{y})$, then

    \begin{enumerate}[label={\arabic*)}]
    \item if $d_Y(y_k, f(q_{n_k})) \le \epsilon/8$: \\ let $n_{k+1}$ be an $n$ such that $q_n \in B(q_{n_k}, \rho_k)$ and $d_Y(f(q_n), f(q_{n_k})) \ge \epsilon/3$; let $\overline{\rho}$ be the greatest $\rho \le \rho_k$ such that $B(q_{n_{k+1}}, \rho) \subseteq B(q_{n_k}, \rho_k)$ and set $\tau(\vec{y}\ ^\smallfrown y_k) = B(q_{n_{k+1}}, \overline{\rho}/2)$.
    \item otherwise:\\ $\tau(\vec{y}\ ^\smallfrown y_k) = B(q_{n_k}, \rho_k / 2)$.
\end{enumerate}

\noindent We now prove that $\tau$ is a winning strategy for Player I. Fix an infinite play $\vec{y} = (y_k)_{k\in\mathbb{N}}$ of Player II and set $B_k = B(x_k, \rho_k) = \tau(\vec{y}\upharpoonright k)$ for every $k\in\mathbb{N}$. First we show that the sequence $(B_k)_{k\in\mathbb{N}}$ converges to an $x \in  K$.  Indeed, it follows directly from $\tau$'s inductive definition that $\bigcap_k B_k = \bigcap_k \overline{B_k}$; the compactness of $K$ guarantees that $ K\cap \bigcap_k \overline{B_k} \neq \emptyset$; finally, the radii of $(B_k)_{k\in\mathbb{N}}$ converge to $0$, hence $ K \cap \bigcap_k {B_k}$ is a singleton $\{x\}$ and $(B_k)_{k\in\mathbb{N}}$ converges to $x$.

So we are left to prove that the sequence $(y_k)_{k\in\mathbb{N}}$ does not converge to $f(x)$. Suppose first that condition 1) of $\tau$'s inductive definition happens only finitely many times during this game run. This means that there exists an $n$ such that for all $k\ge n, \ x_k = x$, and therefore $d_Y(y_k, f(x)) > \epsilon/8$ for all $k \ge n$. In this case $(y_k)_{k\in\mathbb{N}}$ certainly does not converge to $f(x)$.

Now suppose otherwise, and let the increasing sequence $(k_n)_{n\in\mathbb{N}}$ be such that condition 1) happens at the $k_n{+}1$-th round for each $n$. More precisely, $(k_n)_{n\in\mathbb{N}}$ is the increasing sequence such that $d_Y(y_k,f(x_k))\le \epsilon/8$ if and only if $k = k_n$ for some (unique) $n$. For every $n$,
\begin{align*}
    d_Y(y_{k_n}, y_{k_{n+1}}) & \ge d_Y(f(x_{k_n}), f(x_{k_{n+1}})) - d_Y(f(x_{k_n}), y_{k_n}) - d_Y(f(x_{k_{n+1}}), y_{k_{n+1}}) \\ &= d_Y(f(x_{k_n}), f(x_{k_n+1}))- d_Y(f(x_{k_n}), y_{k_n}) - d_Y(f(x_{k_{n+1}}), y_{k_{n+1}})\\& \ge \epsilon/3 - \epsilon/8 - \epsilon /8 = \epsilon/12
\end{align*}
where the equality follows from $x_{k_{n+1}}=x_{k_n+1}$, which holds because in the rounds between $k_n+1$ and $k_{n+1}$ the strategy $\tau$ does not change the center of its balls; the last inequality follows directly from the definition of $\tau$.
Therefore, as $(k_n)_{n\in\mathbb{N}}$ is unbounded, the sequence $(y_k)_{k\in\mathbb{N}}$ does not converge. 

In either case $(y_k)_{k\in\mathbb{N}}$ does not converge to $f(x)$, therefore $\tau$ wins the run. As $(y_k)_{k\in\mathbb{N}}$ was an arbitrary play of Player II, we have shown that $\tau$ is a winning strategy for Player I in $G(f)$.\vspace{1em}

$(\Longrightarrow):$ Suppose that Player I has a winning strategy in $G(f)$, we want to prove that there exists a compact set $ K\subseteq X$ such that $f{\upharpoonright}  K$ has no points of continuity.

We show instead that there exists a compact $ K\subseteq X$ such that Player I has a winning strategy in $G(f{\upharpoonright}  K)$. Indeed, if we do so, it would mean that the function $f{\upharpoonright}  K$ is not Baire class $1$, as otherwise Player II would have a winning strategy in $G(f{\upharpoonright}  K)$ by Theorem \ref{PlayerII}. Then, by \hyperref[BGT]{Baire's grand theorem} --- which can be applied as $K$, being a compact separable metrizable space, is a Polish space --- there would be a compact $ K'\subseteq  K$ such that $f{\upharpoonright}  K'$ has no points of continuity.

Fix a winning strategy $\tau$ for Player I and also fix an  enumeration $(q_n)_{n\in\mathbb{N}}$ of a countable dense subset of $\text{ran}(f)$. Denote by $S$ the tree $\{s \in \mathbb{N}^{<\mathbb{N}} \mid s(n) \le n \text{ for all } n < \text{length}(s)\}$. Note that $[S]$ is a compact subset of the Baire space.

Consider the following map:
\begin{align*}
    \varphi: [S] &\longrightarrow X\\ r &\longmapsto \lim_{n\rightarrow \infty} \tau(q_{r(0)},q_{r(1)}, \dots, q_{r(n)}).
\end{align*}
Since we are assuming $\tau$ winning  for Player I, the limits in the definition always exist, and the map $\varphi$ is well-defined. We now show that $\varphi$ is continuous. Given an $r \in [S]$ and $V$ open neighborhood of $\varphi(r)$, there exists an $n\in \mathbb{N}$ such that $\tau(q_{r(0)},q_{r(1)}, \dots, q_{r(n-1)}) \subseteq V$, by definition of limit of sequences of open sets. But then the rules of the game force every $t \in [S] \cap N_{r{\upharpoonright} n}$ to be mapped by $\varphi$ into $\tau(q_{r(0)},q_{r(1)}, \dots, q_{r(n-1)}) \subseteq V$. Therefore $\varphi$ is continuous and $ K=\text{ran}(\varphi)$ is a compact subset of $X$.

Next, we show that Player I has a winning strategy in $G(f{\upharpoonright}  K)$. Fix $d_Y$ compatible metric on $Y$. For each $y \in Y$ and $k \in \mathbb{N}$, pick an $n \le k$ such that
 $d_Y(q_n,y) = \min_{m\le k} d_Y(q_m, y)$ and let $q(y,k) = q_n$.

We define the strategy $\tau'$ for Player I in $G(f{\upharpoonright}  K)$ as follows: for each $(y_0, \dots, y_k)$ partial play of Player II in $G(f{\upharpoonright} K)$, we let $$\tau'(y_0, y_1, \dots, y_k) = \tau(q(y_0,0), q(y_1,1), \dots, q(y_k,k)) \cap  K.$$

We claim that $\tau'$ is a winning strategy for Player I in $G(f{\upharpoonright}  K)$. Take an infinite play $(y_k)_{k\in\mathbb{N}}$ of Player II. For each $k$, let $n_k$ be such that $n_k \le k$ and $q_{n_k} = q(y_k,k)$. Then $(n_k)_{k\in\mathbb{N}}$ belongs to $[S]$, and the limit of the sequence $(\tau'(y_0,\dots, y_k))_{k \in \mathbb{N}}$ is $\varphi((n_k)_{k\in\mathbb{N}}) \in  K$, by definition of $\varphi$.

If  $(y_k)_{k\in\mathbb{N}}$ is not convergent, then Player I wins the run. So suppose that $(y_k)_{k\in\mathbb{N}}$ converges to $y \in \overline{\text{ran}(f)}$. Then,
 \begin{align*}
     d_Y(q(y_k,k), y) \le d_Y(y_k,y) + d_Y(q(y_k,k), y_k) &=  d_Y(y_k,y) + \min_{m\le k} d_Y(q_m, y_k) \\ &\le 2 d_Y(y_k,y) + \min_{m\le k} d_Y(q_m, y).
 \end{align*}
Since $\lim_{k\rightarrow \infty} y_k = y$ by assumption, and $\lim_{k\rightarrow \infty} \min_{m\le k} d_Y(q_m, y) = 0$ by the density of $(q_n)_{n\in\mathbb{N}}$ in $\text{ran}(f)$, it follows from the above inequalities that $(q(y_k,k))_{k\in\mathbb{N}}$ converges to $y$. Therefore,
$$\lim_{k\rightarrow \infty} y_k = \lim_{k \rightarrow \infty} q(y_k,k) \neq  f(\lim_{k \rightarrow \infty} \tau(q(y_0,0),\dots, q(y_k,k))) = f(\lim_{k \rightarrow \infty} \tau'(y_0,\dots y_k)).$$
The $\neq$ follows from having assumed $\tau$ winning strategy for Player I in $G(f)$, and the last equality instead comes directly from having defined $\tau'(y_0,\dots, y_k)$ as $\tau(q(y_0,0),\dots, q(y_k,k)) \cap K$. 

Hence $(y_k)_{k\in\mathbb{N}}$ does not converge to $f(\lim_{k \rightarrow \infty} \tau'(y_0,\dots y_k))$, and $\tau'$ wins the run. Since $(y_k)_{k\in\mathbb{N}}$ was an arbitrary play of Player II, $\tau'$ is a winning strategy for Player I in $G(f{\upharpoonright}  K)$.
\end{proof}

\begin{remark}\label{choice}
The careful reader may have noticed that in this section we didn't use the axiom of choice, or even the axiom of dependent choice, in their full potential. Indeed, all the proofs contained or cited in this section go through assuming only $\mathsf{AC}_\omega(\mathbb{R})$, the axiom of countable choice over the reals: ``Every countable family of nonempty subsets of $\mathbb{R}$ has a choice function".
\end{remark}

\section{On the determinacy of $G(f)$}\label{sec:3}

Recall that a two-player game $G$ is \emph{determined} if either Player I or Player II has a winning strategy. Carroy \cite{Carroy1}*{Theorem 4.1} proved that Duparc's eraser game $G_e(f)$, which characterizes Baire class $1$ functions from and into $\mathbb{N}^\mathbb{N}$, is determined for every function $f$, and used this determinacy result to give a new game-theoretic proof of Baire's grand theorem restricted to functions between $0$-dimensional Polish spaces \cite{Carroy1}*{Theorem 4.6}. On the other hand, Kiss \cite{Kiss}*{\S 2} used  Baire's grand theorem to prove the determinacy of his game. Our game $G(f)$ is a further generalization of both these games, and, again, a strong relationship between its determinacy and Baire's grand theorem emerges as a direct corollary of the two main theorems of the previous section. Let us introduce the following statement, which is the same as \hyperref[BGT]{Baire's grand theorem} with the hypothesis on the domain weakened from Polish to separable metrizable:
\begin{equation}
  \tag{$\mathsf{GBT}$}\label{GBT}
  \parbox{\dimexpr\linewidth-10em}{%
    \strut
    For all $X,Y$ separable metrizable spaces and $f: X\rightarrow Y$, $f$ is Baire class $1$ if and only if $f{\upharpoonright} K$ has a point of continuity for every compact $K\subseteq X$.
    \strut
  }
\end{equation}

The following is a direct corollary of Theorems \ref{PlayerII} and \ref{PlayerI}.

\begin{corollary}\label{equiv1}
    The following are equivalent:
    \begin{enumerate}[label=\arabic*)]
    \itemsep0em
    \item $G(f)$ is determined for every $f$. 
    \item $\mathsf{GBT}$
\end{enumerate}
\end{corollary}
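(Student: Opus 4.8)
The plan is to read off the statement directly from Theorems \ref{PlayerII} and \ref{PlayerI}, together with the trivial observation that in a two-player game at most one of the players can have a winning strategy (if both did, run the two strategies against each other and both win the resulting single run, a contradiction). The whole argument is then just bookkeeping; the only point that deserves attention is to invoke the determinacy hypothesis only where it is genuinely needed.

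First I would isolate the half of $\mathsf{GBT}$ that holds unconditionally. Suppose $f:X\to Y$ is Baire class $1$. By Theorem \ref{PlayerII} Player II has a winning strategy in $G(f)$, hence Player I has none, hence by the contrapositive of Theorem \ref{PlayerI} there is no compact $K\subseteq X$ for which $f{\upharpoonright}K$ fails to have a point of continuity; equivalently, $f{\upharpoonright}K$ has a point of continuity for every compact $K\subseteq X$. Note this uses neither determinacy nor Baire's grand theorem, only the two theorems of Section 2.

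For the implication $2)\Rightarrow 1)$, assume $\mathsf{GBT}$ and fix an arbitrary $f:X\to Y$ between separable metrizable spaces. Either there is a compact $K\subseteq X$ such that $f{\upharpoonright}K$ has no point of continuity — in which case Theorem \ref{PlayerI} gives Player I a winning strategy in $G(f)$ — or there is no such $K$, in which case $\mathsf{GBT}$ forces $f$ to be Baire class $1$ and Theorem \ref{PlayerII} gives Player II a winning strategy. In both cases $G(f)$ is determined. For $1)\Rightarrow 2)$, assume every $G(f)$ is determined and fix $f:X\to Y$. The left-to-right direction of the biconditional in $\mathsf{GBT}$ is exactly the unconditional observation above. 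Conversely, if $f{\upharpoonright}K$ has a point of continuity for every compact $K\subseteq X$, then by the contrapositive of Theorem \ref{PlayerI} Player I has no winning strategy in $G(f)$, so by determinacy Player II has one, and Theorem \ref{PlayerII} yields that $f$ is Baire class $1$. Hence $\mathsf{GBT}$ holds.

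I do not expect a real obstacle here — this is precisely why the statement is phrased as a corollary — beyond being careful to keep separate the implications that come for free from Theorems \ref{PlayerII} and \ref{PlayerI} and those that actually consume the determinacy hypothesis.
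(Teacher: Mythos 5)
Your argument is correct and is exactly what the paper intends: the corollary is stated as a direct consequence of Theorems \ref{PlayerII} and \ref{PlayerI}, combined with the observation that both players cannot win the same game, and your bookkeeping (including isolating the unconditional half of $\mathsf{GBT}$) matches that route.
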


But unlike Duparc's and Kiss' games, ours is not determined in general, as the next folklore proposition shows.

\begin{proposition}\label{ZFC}
$(\mathsf{AC})$ $\mathsf{GBT}$ is false.
\end{proposition}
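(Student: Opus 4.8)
The plan is to build, using $\mathsf{AC}$, a separable metrizable space $X$ on which \emph{every} function automatically satisfies the right-hand side of $\mathsf{GBT}$, and then to single out one function on $X$ that is not Baire class $1$.

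For $X$ I would take a totally imperfect subset of $\mathbb{R}$ of cardinality $\mathfrak{c}$ --- for instance a Bernstein set, obtained by the usual transfinite recursion along an enumeration of the $\mathfrak{c}$ perfect subsets of $\mathbb{R}$; this is the only place where full $\mathsf{AC}$ (equivalently, a well-ordering of $\mathbb{R}$) is used, and by the results of this section it genuinely cannot be weakened. Equipped with the subspace topology, $X$ is separable metrizable. The key point is that every nonempty compact $K\subseteq X$ is countable: by the Cantor--Bendixson theorem $K$ decomposes as the disjoint union of a perfect set and a countable set, and the perfect part, being a perfect subset of $\mathbb{R}$ contained in $X$, must be empty. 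A nonempty countable compact metric space has an isolated point (otherwise it would be perfect, hence uncountable), and at an isolated point of $K$ the restriction $f{\upharpoonright}K$ is locally constant, hence continuous there. So, no matter which $f$ we choose, $f{\upharpoonright}K$ has a point of continuity for every nonempty compact $K\subseteq X$.

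It then remains to exhibit $f\colon X\to Y$, with $Y$ separable metrizable, that is not Baire class $1$. I would argue by cardinality: the $F_\sigma$ subsets of $X$ are precisely the sets $F\cap X$ with $F$ an $F_\sigma$ subset of $\mathbb{R}$, so there are at most $\mathfrak{c}$ of them, whereas $X$ has $2^{\mathfrak{c}}>\mathfrak{c}$ subsets. Hence we may fix $A\subseteq X$ that is not $F_\sigma$ in $X$; taking $Y=\{0,1\}$ with the discrete topology and $f=\chi_A$, the preimage $f^{-1}(\{1\})=A$ of an open set is not $F_\sigma$, so $f$ is not Baire class $1$. Combined with the previous paragraph, this $f$ witnesses that $\mathsf{GBT}$ fails.

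The argument is entirely elementary, so there is no serious obstacle; the only points requiring a little care are that $X$ is taken of full cardinality $\mathfrak{c}$ (so that the strict inequality $2^{|X|}>\mathfrak{c}$ holds unconditionally), that closed --- hence $F_\sigma$ --- subsets of a subspace are exactly the traces of closed ($F_\sigma$) subsets of the ambient space, and the harmless reading of clause (2) of $\mathsf{GBT}$ as ranging over \emph{nonempty} compact sets.
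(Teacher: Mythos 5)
Your proposal is correct and is essentially the paper's own argument: both take a set $X\subseteq\mathbb{R}$ of size $\mathfrak{c}$ with no uncountable closed (equivalently, no perfect) subset, observe that every compact $K\subseteq X$ is countable and hence has an isolated point giving a continuity point of any restriction, and then use a counting argument ($F_\sigma$ subsets of $X$ number at most $\mathfrak{c}<2^{\mathfrak{c}}$) to pick $A\subseteq X$ not $F_\sigma$ and conclude that $\chi_A$ refutes $\mathsf{GBT}$. The only difference is cosmetic (Cantor--Bendixson phrasing and the remark about nonempty compacta), so nothing needs changing.
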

\begin{proof}
Under the axiom of choice, there exists a set of reals with cardinality of the continuum that does not contain any uncountable closed set. Let $X$ be such a set. Since the family of the $F_\sigma$ subsets of a second countable space has at most the cardinality of the continuum, it follows from Cantor's theorem that there must be a subset $A\subset X$ which is not an $F_\sigma$ subset of $X$.

The function $\mathbbm{1}_A : X\rightarrow 2$, with $\mathbbm{1}_A(x) = 1$ iff $x \in A$, is not Baire class $1$, as $A$ is not an $F_\sigma$ subset of $X$. Nonetheless, we claim that $\mathbbm{1}_A {\upharpoonright}K$ has a point of continuity for every compact $K\subseteq X$. Fix a compact $K\subseteq X$, then $K$ needs to be countable, as we have assumed $X$ not to contain any uncountable closed set. But then $K$, being a countable and compact subset of $\mathbb{R}$, has an isolated point, which is, in particular, a continuity point of $\mathbbm{1}_A {\upharpoonright} K$.
\end{proof}

Note that the same argument shows that $G(\mathbbm{1}_A)$ is undetermined. 

A separable metrizable space is (\emph{absolutely}) \emph{analytic} precisely when it is the continuous image of a Polish space.  Gerlits and Laczkovich \cite{GrandBaire}*{Theorem 2} showed that \hyperref[BGT]{Baire's grand theorem} holds if the domain is assumed only to be an absolutely analytic metrizable space --- actually, they stated this generalization for real functions, but their argument goes through  assuming only separable metrizable codomains. From the theorems of the previous section, it follows that the game $G(f)$ is determined for every function $f$ with analytic domain. 

We cannot hope to extend \textit{tout court} this determinacy result to functions with co-analytic domains, where a separable metrizable space is said to be \emph{co-analytic} if it is homeomorphic to the complement of an analytic set in a Polish space. In fact, the existence of a co-analytic set of cardinality of the continuum that doesn't contain any uncountable closed set is consistent with $\mathsf{ZFC}$ --- in particular, it follows from $V=L$, see \cite{Kanamori}*{Theorem 13.12} --- and the example defined in Proposition \ref{ZFC} would give us a function $f$ with separable metrizable co-analytic domain witnessing the failure of $\mathsf{GBT}$ and the undeterminacy of $G(f)$.

We now focus on $\mathsf{GBT}$, which, by Proposition \ref{ZFC}, is inconsistent with $\mathsf{AC}$. We first introduce a couple of statements coming from Descriptive Set Theory that are strictly related to $\mathsf{GBT}$. We recall that, given three sets $A,B,S$ we say that $S$ \emph{separates $A$ from $B$} if $A \subseteq S$ and $B \cap S = \emptyset$.
\begin{equation}
  \tag{$\mathsf{HSP}$}
  \parbox{\dimexpr\linewidth-10em}{%
    \strut
    For every disjoint $A,B\subseteq \mathbb{N}^\mathbb{N}$ such that there is no $F_\sigma$ set separating $A$ from $B$, there is a Cantor set $\mathcal{C}\subseteq A\cup B$ with $\mathcal{C}\cap B$ countable and dense in $\mathcal{C}$.
    \strut
  }
\end{equation}
\noindent where $\mathsf{HSP}$ stands for Hurewicz' Separation Property. The fact that the trace of $B$ on $\mathcal{C}$ (i.e. $B\cap \mathcal{C}$) is countable and dense not only means that $B\cap \mathcal{C}$ is $F_\sigma$ in $\mathcal{C}$, but also that it is $F_\sigma$-complete \cite{CDST}*{Theorem 22.10}. $\mathsf{HSP}$ is known to hold under $\mathsf{AD}$ \citelist{\cite{CDST}*{\S 21.F} \cite{Carroy2}*{Theorem 4.2}}.
\begin{fact}\label{large}
$\mathsf{HSP}$ is a strong statement, in the sense that $\mathsf{HSP}+\mathsf{DC}$ is equiconsistent with the existence of an inaccessible cardinal. Indeed, $\mathsf{HSP+DC}$ implies $\mathsf{PSP}$, the perfect set property for every subset of $\mathbb{N}^\mathbb{N}$, and it is well-known that $\mathsf{PSP+DC}$ implies the consistency of an inaccessible cardinal \cite{Kanamori}*{Propositions 11.4 and 11.5}; on the other hand,  Todor\v{c}evi\'{c} and Di Prisco \cite{Todorcevic}*{Theorem 4.1} proved that $\mathsf{HSP}$ holds in Solovay's model, hence the equiconsistency.
\end{fact}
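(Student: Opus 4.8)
The statement is an equiconsistency, and two of its three ingredients are the quoted results \cite{Kanamori}*{Propositions 11.4 and 11.5} and \cite{Todorcevic}*{Theorem 4.1}; the plan is therefore to prove the remaining ingredient, that $\mathsf{HSP}+\mathsf{DC}$ implies $\mathsf{PSP}$, and then assemble the three pieces.

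To prove $\mathsf{HSP}+\mathsf{DC}\Rightarrow\mathsf{PSP}$, I would fix an uncountable $A\subseteq\mathbb{N}^\mathbb{N}$ and produce a Cantor set contained in $A$, splitting into cases according to whether $A$ is $F_\sigma$. If $A=\bigcup_n F_n$ with each $F_n$ closed, then, since a countable union of countable sets is countable (available under $\mathsf{DC}$), some $F_n$ is uncountable; being an uncountable closed subspace of $\mathbb{N}^\mathbb{N}$, it is an uncountable Polish space, and the classical perfect set theorem --- whose Cantor--Bendixson proof goes through in $\mathsf{ZF}+\mathsf{DC}$ --- supplies a Cantor set $\mathcal{C}\subseteq F_n\subseteq A$. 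If instead $A$ is not $F_\sigma$, then no $F_\sigma$ set separates $A$ from $\mathbb{N}^\mathbb{N}\setminus A$: any separator $S$ would satisfy $A\subseteq S$ and $S\cap(\mathbb{N}^\mathbb{N}\setminus A)=\emptyset$, hence $S=A$, contradicting the assumption that $A$ is not $F_\sigma$. Thus $\mathsf{HSP}$ applies to the disjoint pair $(A,\mathbb{N}^\mathbb{N}\setminus A)$ and yields a Cantor set $\mathcal{C}\subseteq\mathbb{N}^\mathbb{N}$ with $\mathcal{C}\cap(\mathbb{N}^\mathbb{N}\setminus A)$ countable and dense in $\mathcal{C}$; then $\mathcal{C}\cap A=\mathcal{C}\setminus(\mathbb{N}^\mathbb{N}\setminus A)$ is the complement in $\mathcal{C}$ of a countable set, hence a dense $G_\delta$ subset of the Cantor set $\mathcal{C}$, and a dense $G_\delta$ in a Cantor set contains a Cantor set (another routine Cantor-scheme argument, available under $\mathsf{DC}$). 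This last Cantor set lies inside $A$, which finishes the proof of $\mathsf{PSP}$.

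With this in place, the equiconsistency follows exactly as sketched in the statement: on one hand, $\mathsf{PSP}+\mathsf{DC}$ implies that $\omega_1$ is inaccessible in $L[r]$ for every real $r$ (\cite{Kanamori}*{Propositions 11.4 and 11.5}), so inside any model of $\mathsf{ZF}+\mathsf{DC}+\mathsf{HSP}$ the constructible universe $L$ is a model of $\mathsf{ZFC}$ in which $\omega_1^V$ is inaccessible; on the other hand, the L\'evy collapse of an inaccessible produces Solovay's model, a model of $\mathsf{ZF}+\mathsf{DC}$ in which $\mathsf{HSP}$ holds (\cite{Todorcevic}*{Theorem 4.1}). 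The only step demanding real work is $\mathsf{HSP}+\mathsf{DC}\Rightarrow\mathsf{PSP}$ above, and there the sole subtlety is bookkeeping: checking that the $F_\sigma$/not-$F_\sigma$ dichotomy is exhaustive and that the two perfect-set extractions used --- the perfect kernel of an uncountable closed set, and a Cantor set inside a dense $G_\delta$ --- require no choice beyond $\mathsf{DC}$.
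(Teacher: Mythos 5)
Your proposal is correct and follows the same three-piece assembly as the paper, which states this Fact with only a sketch: the Kanamori results for $\mathsf{PSP}+\mathsf{DC}$, the Todor\v{c}evi\'{c}--Di Prisco theorem for Solovay's model, and the implication $\mathsf{HSP}+\mathsf{DC}\Rightarrow\mathsf{PSP}$, which the paper asserts without proof. Your case analysis (uncountable $F_\sigma$ sets contain a Cantor set via Cantor--Bendixson; otherwise apply $\mathsf{HSP}$ to $A$ and its complement and extract a Cantor set from the resulting dense $G_\delta$) is a correct and choice-aware filling-in of exactly that asserted step.
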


Consider now this seemingly weaker statement:
\begin{equation}
  \tag{$\mathsf{WHSP}$}
  \parbox{\dimexpr\linewidth-10em}{%
    \strut
    For every disjoint $A,B\subseteq \mathbb{N}^\mathbb{N}$ such that there is no $F_\sigma$ set separating $A$ from $B$, there is a Cantor set $\mathcal{C}\subseteq A\cup B$ with $\mathcal{C}\cap A$ dense and codense in $\mathcal{C}$.
    \strut
  }
\end{equation}
This statement is clearly a consequence of $\mathsf{HSP}$, but it doesn't tell us anything about the definability of the trace of $A$ or $B$ on $\mathcal{C}$.

\begin{theorem}\label{equiv2}
The following are equivalent:
\begin{enumerate}[label=\arabic*)]
    \item $\mathsf{GBT}$
    \item $\mathsf{WHSP}$
\end{enumerate}
\end{theorem}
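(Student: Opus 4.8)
The plan is to prove the two implications separately, keeping in mind that the forward direction of $\mathsf{GBT}$ (Baire class $1\Rightarrow$ every compact restriction has a continuity point) is just classical \hyperref[BGT]{Baire's grand theorem} applied with domain $K$, which is Polish; thus the whole content of $\mathsf{GBT}$, and of its negation, lies in the backward direction, equivalently: \emph{if $f$ is not Baire class $1$, then $f{\upharpoonright}K$ is nowhere continuous for some compact $K$} (and, again by the classical theorem on $K$, this is the same as $f{\upharpoonright}K$ not being Baire class $1$ for some compact $K$).

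For $1)\Rightarrow 2)$, given disjoint $A,B\subseteq\mathbb{N}^\mathbb{N}$ not separated by any $F_\sigma$ set, put $Z=A\cup B$ as a subspace of $\mathbb{N}^\mathbb{N}$ and look at $\mathbbm{1}_A\colon Z\to 2$. Since $F_\sigma$ subsets of $Z$ are exactly traces on $Z$ of $F_\sigma$ subsets of $\mathbb{N}^\mathbb{N}$ and $B=Z\setminus A$, the hypothesis says precisely that $A$ is not $F_\sigma$ in $Z$, so $\mathbbm{1}_A$ is not Baire class $1$. By $\mathsf{GBT}$ there is a nonempty compact $K\subseteq Z$ with $\mathbbm{1}_A{\upharpoonright}K$ nowhere continuous; then $K$ has no isolated points and, being a compact metrizable subspace of $\mathbb{N}^\mathbb{N}$, is a Cantor set, and unwinding nowhere-continuity shows $K\cap A$ is dense and codense in $K$. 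So $\mathcal{C}=K$ witnesses $\mathsf{WHSP}$.

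For $2)\Rightarrow 1)$, the harder direction, suppose $f\colon X\to Y$ is not Baire class $1$. First I would reduce the codomain: embedding $Y$ into the Hilbert cube $[0,1]^\mathbb{N}$ affects neither "Baire class $1$" nor "nowhere continuous on $K$", so we may take $Y=[0,1]^\mathbb{N}$, which is \emph{compact}. Next, using the classical characterization "$f$ is Baire class $1$ iff for all disjoint closed $F_0,F_1\subseteq Y$ there is an $F_\sigma$ set $S$ with $f^{-1}(F_0)\subseteq S$, $S\cap f^{-1}(F_1)=\emptyset$" — whose nontrivial direction is proved by writing an open $V\subseteq Y$ as a countable union of closed sets $C_n$ and separating each $C_n$ from $Y\setminus V$ — the failure of Baire class $1$ yields disjoint closed $F_0,F_1\subseteq Y$ such that $A:=f^{-1}(F_0)$ and $B:=f^{-1}(F_1)$ are disjoint and not separated by any $F_\sigma$ subset of $X$. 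Crucially, compactness of $Y$ gives $\delta:=d(F_0,F_1)>0$. Now embed $X$ into a Hilbert cube and let $\overline{X}$ be its closure there, a compact metrizable space; $A,B$ are still not $F_\sigma$-separable in $\overline{X}$. Fix a continuous surjection $\rho\colon 2^\mathbb{N}\to\overline{X}$ (Hausdorff--Alexandroff); since $2^\mathbb{N}$ is compact, $\rho$ is a closed map, hence sends $F_\sigma$ sets to $F_\sigma$ sets, and therefore $A':=\rho^{-1}(A)$, $B':=\rho^{-1}(B)$ are not separated by any $F_\sigma$ subset of $2^\mathbb{N}$, hence not by any $F_\sigma$ subset of $\mathbb{N}^\mathbb{N}$ (an $F_\sigma$ separator $T$ of $A',B'$ would make $\rho(T)$ an $F_\sigma$ separator of $A,B$ in $\overline{X}$). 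Apply $\mathsf{WHSP}$: there is a Cantor set $\mathcal{C}\subseteq A'\cup B'=\rho^{-1}(A\cup B)$ with $\mathcal{C}\cap A'$ dense and codense in $\mathcal{C}$. Set $K:=\rho(\mathcal{C})\subseteq A\cup B\subseteq X$. On $\mathcal{C}$ the map $f\circ\rho$ takes values in $F_0$ on the dense set $\mathcal{C}\cap A'$ and in $F_1$ on the dense set $\mathcal{C}\setminus A'$, so, since $d(F_0,F_1)=\delta>0$, its oscillation at every point of $\mathcal{C}$ is at least $\delta$; thus $f\circ\rho{\upharpoonright}\mathcal{C}$ is nowhere continuous, and since it equals $(f{\upharpoonright}K)\circ(\rho{\upharpoonright}\mathcal{C})$ with $\rho{\upharpoonright}\mathcal{C}\colon\mathcal{C}\twoheadrightarrow K$ continuous, $f{\upharpoonright}K$ is nowhere continuous, as required.

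The step I expect to be the real obstacle is transporting the non-$F_\sigma$-separability of $A,B$ from the arbitrary space $X$ down to a subset of $\mathbb{N}^\mathbb{N}$ (where $\mathsf{WHSP}$ lives): a generic continuous surjection onto $\overline{X}$ need not preserve $F_\sigma$-ness in the required direction, and routing through continuous bijections fails because injective continuous images raise Borel complexity. The device that makes it work is to take the \emph{compact} hull $\overline{X}$ and surject onto it from the \emph{compact} space $2^\mathbb{N}$, so that the surjection is closed and hence $F_\sigma$-preserving. A second, subtler point is that the naive reduction via $\mathbbm{1}_{f^{-1}(V)}$ is not good enough: a point where $f{\upharpoonright}\mathcal{C}$ is continuous with value on $\partial V$ need not be a discontinuity point of $\mathbbm{1}_{f^{-1}(V)}{\upharpoonright}\mathcal{C}$, so one really needs the compact codomain and a pair of disjoint closed sets at \emph{positive distance} to supply the "gap" that forces nowhere-continuity.
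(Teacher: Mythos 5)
Your proof is correct, and while its overall architecture matches the paper's (reduce to a pair of disjoint closed sets in $Y$ whose preimages are not $F_\sigma$-separable, transfer the pair to the Baire space along a closed continuous surjection, apply $\mathsf{WHSP}$, push the Cantor set forward, and conclude nowhere-continuity on its compact image), the implementation of the transfer step in $2)\Rightarrow 1)$ is genuinely different. The paper uses Engelking's theorem to get a closed continuous surjection $g\colon X'\to X$ with $X'\subseteq\mathbb{N}^\mathbb{N}$, pulls back the whole function to $h=f\circ g$, and only then extracts the non-separable pair $h^{-1}(F_n)$, $h^{-1}(Y\setminus V)$; you instead extract the bad pair $f^{-1}(F_0)$, $f^{-1}(F_1)$ already in $X$ (via the standard closed-pair separation characterization of Baire class $1$, which amounts to the same decomposition $V=\bigcup_n F_n$ the paper uses), and then pull back just the two sets along a Hausdorff--Alexandroff surjection $2^\mathbb{N}\twoheadrightarrow\overline{X}$, where $\overline{X}$ is the closure of $X$ in the Hilbert cube; closedness of that surjection (automatic by compactness) plays exactly the role that closedness of $g$ plays in the paper, namely that an $F_\sigma$ separator upstairs would push forward to one downstairs. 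What your route buys is that it replaces Engelking's theorem by the more elementary Hausdorff--Alexandroff theorem plus a compactification, and it transfers nowhere-continuity cleanly by composing with the continuous surjection $\rho{\upharpoonright}\mathcal{C}$ rather than by transferring density of the two traces to $g[\mathcal{C}]$ as the paper does. One small remark: your insistence on compactifying the codomain so that $d(F_0,F_1)>0$ is unnecessary --- mere disjointness of the two closed sets already rules out a continuity point (a continuity point would force a common limit lying in both closed sets, as in the paper's argument) --- so that step is a harmless but superfluous strengthening; your diagnosis that the naive reduction via $\mathbbm{1}_{f^{-1}(V)}$ is insufficient, however, is exactly right and is also why the paper works with the closed pair $F_n$, $Y\setminus V$.
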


\begin{proof}
$1) \Longrightarrow 2)$: let $A,B\subseteq \mathbb{N}^\mathbb{N}$ be disjoint subsets of the Baire space such that $A$ cannot be separated from $B$ by an $F_\sigma$ set. Equivalently, $A$ is not $F_\sigma$ with respect to the relative topology on $A\cup B$. Therefore, the function $\mathbbm{1}_A: A\cup B \rightarrow 2$, with $\mathbbm{1}_A(x) = 1$ iff $x \in A$, is not Baire class $1$, and by $\mathsf{GBT}$ there exists a compact set $ K\subseteq A \cup B$ such that $\mathbbm{1}_A{\upharpoonright} K$ has no points of continuity. This means that $A\cap  K$ is both dense and codense in $ K$, as otherwise $\mathbbm{1}_A$ would have a point of continuity. Finally, notice that $ K$, being a compact and perfect subset of $\mathbb{N}^\mathbb{N}$, is actually a Cantor set \cite{CDST}*{Theorem 7.4}. Hence $\mathsf{WHSP}$ holds. \vspace{1em}

$2)\Longrightarrow 1)$: let $X,Y$ be separable metrizable spaces and $f:X\rightarrow Y$ a function which is not Baire class $1$. Every Polish space is the image of $\mathbb{N}^\mathbb{N}$ by a continuous and closed map \cite{Engelking}. As every separable metrizable space embeds into a Polish space, there exists a closed and continuous surjection $g: X' \rightarrow X$ for some $X' \subseteq \mathbb{N}^\mathbb{N}$. Since the image of a closed set by a closed function is still closed by definition, images of $F_\sigma$ sets by a closed function remain $F_\sigma$. Therefore, the function $h = f \circ g : X' \rightarrow Y$ is still not Baire class $1$.

As $h$ is not Baire class $1$, there is an open set $V\subseteq Y$ such that $h^{-1}(V)$ is not an $F_\sigma$ set of $X'$. Fix such $V$ and also fix a sequence of closed sets $(F_n)_{n\in\mathbb{N}}$ such that $V = \bigcup_n F_n$. It must be the case that, for some $n, \ h^{-1}(F_n)$ is not separable from $h^{-1}(Y\setminus V)$ by an $F_\sigma$ set, as otherwise $h^{-1}(V)$  would be a countable union of $F_\sigma$ sets, which is still $F_\sigma$. Fix such an $n$, then, by $\mathsf{WHSP}$, there is a Cantor set $\mathcal{C}\subseteq X'$ with both $h^{-1}(F_n) \cap \mathcal{C}$ and $h^{-1}(Y\setminus V) \cap \mathcal{C}$ dense in $\mathcal{C}$.

By continuity of $g$, the set $g[\mathcal{C}]$ is compact in $X$ and $f^{-1}(F_n) \cap g[\mathcal{C}], f^{-1}(Y\setminus V) \cap g[\mathcal{C}]$ are both dense in $g[\mathcal{C}]$.

We claim that the function $f{\upharpoonright } g[\mathcal{C}]$ has no points of continuity. Take an $x \in g[\mathcal{C}]$, and fix two sequences $(x_k)_{k\in\mathbb{N}} \subset f^{-1}(F_n) \cap g[\mathcal{C}]$, $(x'_k)_{k\in\mathbb{N}} \subset f^{-1}(Y\setminus V) \cap g[\mathcal{C}]$ converging to $x$. Such sequences exist because $f^{-1}(F_n) \cap g[\mathcal{C}]$ and $f^{-1}(Y\setminus V) \cap g[\mathcal{C}]$ are both dense in $ g[\mathcal{C}]$. If the sequences $(f(x_k))_{k\in\mathbb{N}}$ and  $(f(x'_k))_{k\in\mathbb{N}}$ converged in $Y$, they would converge in $F_n$ and in $Y\setminus V$, respectively, as both these sets are closed.  Thus, even if their limits were to exist, they could not coincide. In particular, $x$ is not a point of continuity of $f{\upharpoonright } g[\mathcal{C}]$. Since $x \in g[\mathcal{C}]$ was arbitrary, we have that no $x \in g[\mathcal{C}]$ is a continuity point of $f{\upharpoonright } g[\mathcal{C}]$.

Given a function $f:X\rightarrow Y$ between separable metrizable spaces which is not Baire class $1$, we have found a compact $ K\subseteq X$ such that $f{\upharpoonright }  K$ has no points of continuity. On the other hand, if $f:X\rightarrow Y$ is Baire class $1$, then the classical argument used in the proof of \hyperref[BGT]{Baire's grand theorem} shows that $f{\upharpoonright} K$ has a point of continuity for every compact $K\subseteq X$ (e.g. see \cite{CDST}*{Theorem 24.15}), with no need to invoke $\mathsf{WHSP}$. Hence $\mathsf{GBT}$ holds.
\end{proof}

As $\mathsf{HSP}$, and in particular $\mathsf{WHSP}$, holds under $\mathsf{AD}$ \citelist{\cite{CDST}*{\S 21.F}\cite{Carroy2}*{Theorem 4.2}} and in Solovay's model \cite{Todorcevic}*{Theorem 4.1}, we can say the same of $\mathsf{GBT}$ and the full determinacy of our game, by Theorem \ref{equiv2} and Corollary \ref{equiv1}. However, the precise consistency strength of these three statements ($+\mathsf{DC}$) is still unknown. 

$\mathsf{WHSP}$, compared to $\mathsf{HSP}$, seems to be weak enough to be proved consistent relative to $\mathsf{ZF}$, with no large cardinals needed (see Fact \ref{large}). Hence, the following conjecture.
\begin{conj}
$\mathsf{GBT+DC}$ is consistent relative to $\mathsf{ZF}$.
\end{conj}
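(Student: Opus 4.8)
Since $\mathsf{GBT}$ is equivalent to $\mathsf{WHSP}$ by Theorem~\ref{equiv2}, and that equivalence uses only $\mathsf{AC}_\omega(\mathbb{R})$ (hence holds in every model of $\mathsf{ZF}+\mathsf{DC}$), it suffices to prove $\mathrm{Con}(\mathsf{ZF})\Rightarrow\mathrm{Con}(\mathsf{ZF}+\mathsf{DC}+\mathsf{WHSP})$; by Corollary~\ref{equiv1} this would also yield the full determinacy of $G(f)$. The reason to expect this is feasible is that $\mathsf{WHSP}$, unlike $\mathsf{HSP}$, imposes nothing on the size or definability of the traces $\mathcal{C}\cap A$, $\mathcal{C}\cap B$ beyond density, so the derivation of $\mathsf{PSP}$ from $\mathsf{HSP}$ in Fact~\ref{large} has no analogue. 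The plan, accordingly, is to build the model in the style of Shelah's theorem that $\mathrm{Con}(\mathsf{ZF})$ implies the consistency of $\mathsf{ZF}+\mathsf{DC}+{}$``every set of reals has the Baire property'' --- by a symmetric extension over an $\omega_1$-iteration of amalgamated (``sweet'') forcings rather than over the Levy collapse of an inaccessible --- and, ideally, to cash this in by proving outright that $\mathsf{ZF}+\mathsf{DC}+{}$``every set of reals has the Baire property'' already implies $\mathsf{WHSP}$.

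For the latter one would first note that, in $\mathsf{ZF}$, ``no $F_\sigma$ set separates $A$ from $B$'' is equivalent to ``$A$ is not an $F_\sigma$ subset of the subspace $Z:=A\cup B$''. A localization then produces a relatively closed $F\subseteq Z$ on which $A\cap F$ is nowhere relatively $F_\sigma$ (take $F$ to be $Z$ minus the union of the basic open pieces of $Z$ on which $A$ is relatively $F_\sigma$; by the Lindel\"of property $F\neq\emptyset$, and both $A\cap F$ and $B\cap F$ are dense in $F$). The goal is then to carve a Cantor scheme $(V_u)_{u\in 2^{<\mathbb{N}}}$ of open sets meeting $F$, together with designated points $x_u\in A\cap F$ for $|u|$ even and $x_u\in B\cap F$ for $|u|$ odd, converging along the branch through $u$ that continues with zeros, so that $\mathcal{C}:=\overline{\{x_u : u\in 2^{<\mathbb{N}}\}}$ is a Cantor set contained in $Z$ with $A\cap\mathcal{C}$ dense and codense in $\mathcal{C}$; the Baire property of $A$ relative to the successive sub-pieces of $F$ is what one would use to keep the fusion alive at each node. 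Equivalently, one packages this as the determinacy of a single Hurewicz-type game $H(A,B)$ in which the players cooperatively grow such a scheme, a Player~II win producing an $F_\sigma$ separator and a Player~I win the Cantor set; the determinacy of every $H(A,B)$ should, like the Baire property itself, be forceable over a model of mere $\mathsf{ZF}$.

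The main obstacle is that $\mathsf{WHSP}$ genuinely forces, for every ``inseparable'' pair $(A,B)$, a compact set living \emph{inside} the union $A\cup B$ --- a perfect-set-property-flavoured conclusion --- so the fusion must converge to points of $Z$ rather than fizzle, and $Z$ need not be Polish. That some nontrivial regularity of all sets of reals is unavoidable is already visible from Bernstein sets: if $Z\subseteq\mathbb{N}^\mathbb{N}$ is Bernstein and $D=\{x\in\mathbb{N}^\mathbb{N} : x(n)=0 \text{ for infinitely many } n\}$, then $A:=Z\cap D$, $B:=Z\setminus D$ satisfy the hypothesis of $\mathsf{WHSP}$ (an $F_\sigma$ separator would differ from $D$ by an uncountable Borel set, which would meet the Bernstein complement $\mathbb{N}^\mathbb{N}\setminus Z$ in a perfect set), while no Cantor set lies inside $Z=A\cup B$; so $\mathsf{WHSP}$ fails under $\mathsf{AC}$. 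The delicate question is whether the regularity actually needed can be pushed down to $\mathsf{ZF}$-consistency strength: the Baire property only presents $A$ as $(\text{open})\,\triangle\,(\text{meager})$, which does not by itself locate such a compact inseparability-witness, so one may be forced instead to isolate a new regularity property --- adapted to the $\sigma$-ideal generated by the ``$F_\sigma$-separable'' sets --- and verify that it holds in a Shelah-style symmetric model. Identifying that property, or finding a direct derivation of $\mathsf{WHSP}$ from the Baire property, is the heart of the conjecture.
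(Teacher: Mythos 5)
This statement is stated in the paper as an open conjecture --- the paper offers no proof, only the heuristic that $\mathsf{WHSP}$ ``seems weak enough'' because the derivation of $\mathsf{PSP}$ from $\mathsf{HSP}$ (Fact \ref{large}) has no obvious analogue --- and your proposal does not prove it either. Your first reduction (via Theorem \ref{equiv2} and Corollary \ref{equiv1}, it suffices to get $\mathrm{Con}(\mathsf{ZF}+\mathsf{DC}+\mathsf{WHSP})$ from $\mathrm{Con}(\mathsf{ZF})$) is exactly the paper's own motivation for phrasing the conjecture, and your Bernstein-set example merely re-derives Proposition \ref{ZFC}. The substantive content would be the claim that $\mathsf{ZF}+\mathsf{DC}+{}$``every set of reals has the Baire property'' (hence Shelah's symmetric model) implies $\mathsf{WHSP}$, or failing that, that some other regularity property implying $\mathsf{WHSP}$ is forceable over mere $\mathsf{ZF}$. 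You never establish this implication; you yourself concede it is ``the heart of the conjecture.'' A sketch whose central step is explicitly left open is not a proof, so the conjecture remains exactly as open after your argument as before it.

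Concretely, the gap in the fusion sketch is the one you half-identify but do not overcome: the Cantor scheme must converge to points of $Z=A\cup B$, and $Z$ is an arbitrary (typically non-Polish, non-analytic) subspace, so nothing in the construction forces the limit points of your designated sequence $x_u$ to stay inside $Z$ --- the closure $\overline{\{x_u\}}$ is taken in $\mathbb{N}^\mathbb{N}$ and will in general leak outside $A\cup B$. The Baire property of $A$ presents it only as open-modulo-meager relative to each piece of $F$, which controls density of the traces but gives no compactness-type grip on where limits land; producing a compact set \emph{inside} $A\cup B$ is precisely a perfect-set-property-flavoured conclusion localized to the $\sigma$-ideal of $F_\sigma$-separable sets, and it is not known that such a conclusion can be had at $\mathsf{ZF}$ consistency strength --- indeed for the stronger $\mathsf{HSP}$ it provably cannot (with $\mathsf{DC}$), by Fact \ref{large}. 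Until you either prove ``$\mathsf{BP}$ for all sets $\Rightarrow \mathsf{WHSP}$'' or isolate and force the new regularity property you allude to, and in the latter case also verify it does not smuggle in $\mathsf{PSP}$-like strength, the proposal is a research plan consistent with the paper's discussion, not a solution.
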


Lastly, notice that the \hyperref[definition]{definition} of our game does not rely on the separability or the metrizability of the function's domain and codomain, and it would make perfect sense to study it on broader classes of functions. Future research can shed  light on how our game behaves on the class of functions with metrizable (not necessarily separable) domains and separable metrizable codomains. Would our results of \hyperref[sec:2]{Section 2} still hold? How much choice would be needed to prove them?

\begin{bibdiv}
\begin{biblist}

\bib{Kiss}{article}{
   author={Kiss, Viktor},
   title={A game characterizing Baire class 1 functions},
   journal={J. Symb. Log.},
   volume={85},
   date={2020},
   number={1},
   pages={456--466},
   issn={0022-4812}
}

\bib{Carroy1}{article}{
   author={Carroy, Rapha\"{e}l},
   title={Playing in the first Baire class},
   journal={MLQ Math. Log. Q.},
   volume={60},
   date={2014},
   number={1-2},
   pages={118--132},
   issn={0942-5616}
}

\bib{Carroy2}{article}{
   author={Carroy, Rapha\"{e}l},
   author={Miller, Benjamin D.},
   author={Soukup, D\'{a}niel T.},
   title={The open dihypergraph dichotomy and the second level of the Borel
   hierarchy},
   conference={
      title={Trends in set theory},
   },
   book={
      series={Contemp. Math.},
      volume={752},
      publisher={Amer. Math. Soc., [Providence], RI},
   },
   date={2020},
   pages={1--19}
}

\bib{GrandBaire}{article}{
   author={Gerlits, János},
   author={Laczkovich, Miklós},
   title={On barely continuous and Baire $1$ functions},
   conference={
      title={Topology, Vol. II},
      address={Proc. Fourth Colloq., Budapest},
      date={1978},
   },
   book={
      series={Colloq. Math. Soc. J\'{a}nos Bolyai},
      volume={23},
      publisher={North-Holland, Amsterdam-New York},
   },
   date={1980},
   pages={493--499}
}

\bib{CDST}{book}{
   author={Kechris, Alexander S.},
   title={Classical descriptive set theory},
   series={Graduate Texts in Mathematics},
   volume={156},
   publisher={Springer-Verlag, New York},
   date={1995},
   pages={xviii+402},
   isbn={0-387-94374-9}
}

\bib{Wadge}{book}{
   author={Wadge, William Wilfred},
   title={Reducibility and determinateness on the Baire space},
   note={Thesis (Ph.D.)--University of California, Berkeley},
   publisher={ProQuest LLC, Ann Arbor, MI},
   date={1983},
   pages={334}
}

\bib{Duparc}{article}{
   author={Duparc, Jacques},
   title={Wadge hierarchy and Veblen hierarchy. I. Borel sets of finite
   rank},
   journal={J. Symbolic Logic},
   volume={66},
   date={2001},
   number={1},
   pages={56--86},
   issn={0022-4812}
}

\bib{Todorcevic}{article}{
   author={Di Prisco, Carlos Augusto},
   author={Todor\v{c}evi\'{c}, Stevo},
   title={Perfect-set properties in $L({\bf R})[U]$},
   journal={Adv. Math.},
   volume={139},
   date={1998},
   number={2},
   pages={240--259},
   issn={0001-8708}
}

\bib{Engelking}{article}{
   author={Engelking, Ryszard},
   title={On closed images of the space of irrationals},
   journal={Proc. Amer. Math. Soc.},
   volume={21},
   date={1969},
   pages={583--586},
   issn={0002-9939}
}

\bib{Kanamori}{book}{
   author={Kanamori, Akihiro},
   title={The higher infinite},
   series={Springer Monographs in Mathematics},
   edition={2},
   note={Large cardinals in set theory from their beginnings},
   publisher={Springer-Verlag, Berlin},
   date={2003},
   pages={xxii+536},
   isbn={3-540-00384-3}
}

\bib{MottoRos}{article}{
   author={Motto Ros, Luca},
   title={Game representations of classes of piecewise definable functions},
   journal={MLQ Math. Log. Q.},
   volume={57},
   date={2011},
   number={1},
   pages={95--112},
   issn={0942-5616}
}

\bib{Kuratowski}{book} {
    AUTHOR = {Kuratowski, Kazimierz},
     TITLE = {Topologie},
     VOLUME = {1},
     translator = {Jaworowski, Jan W.},
 PUBLISHER = {Accademic Press, New York-London},
      YEAR = {1966}
}

\bib{Camerlo}{article}{
   author={Camerlo, Riccardo},
   author={Duparc, Jacques},
   title={Some remarks on Baire's grand theorem},
   journal={Arch. Math. Logic},
   volume={57},
   date={2018},
   number={3-4},
   pages={195--201},
   issn={0933-5846}
}

\bib{Baire}{book}{
   author={Baire, Ren\'{e}},
   title={Le\c{c}ons sur les fonctions discontinues},
   language={French},
   series={Les Grands Classiques Gauthier-Villars. [Gauthier-Villars Great
   Classics]},
   note={Reprint of the 1905 original},
   publisher={\'{E}ditions Jacques Gabay, Sceaux},
   date={1995},
   pages={viii+65},
   isbn={2-87647-124-8}
}

\bib{Karlova}{article}{
   author={Karlova, Olena},
   title={A generalization of a Baire theorem concerning barely continuous
   functions},
   journal={Topology Appl.},
   volume={258},
   date={2019},
   pages={433--438},
   issn={0166-8641}
}

\bib{Karlova2}{article}{
   author={Karlova, Olena},
   title={On Baire-one mappings with zero-dimensional domains},
   journal={Colloq. Math.},
   volume={146},
   date={2017},
   number={1},
   pages={129--141},
   issn={0010-1354}
}

\bib{MR3819690}{article}{
   author={Po\v{s}ta, Petr},
   title={Dirichlet problem and subclasses of Baire-one functions},
   journal={Israel J. Math.},
   volume={226},
   date={2018},
   number={1},
   pages={177--188},
   issn={0021-2172}
}

\bib{MR377482}{article}{
   author={Odell, Edward Wilfred},
   author={Rosenthal, Haskell Paul},
   title={A double-dual characterization of separable Banach spaces
   containing $l^{1}$},
   journal={Israel J. Math.},
   volume={20},
   date={1975},
   number={3-4},
   pages={375--384},
   issn={0021-2172}
}

\bib{Hausdorff}{book}{
   author={Hausdorff, Felix},
   title={Set theory},
   edition={2},
   note={Translated from the German by John R. Aumann et al},
   publisher={Chelsea Publishing Co., New York},
   date={1962},
   pages={352}
}

\bib{Andretta}{article}{
   author={Andretta, Alessandro},
   title={More on Wadge determinacy},
   journal={Ann. Pure Appl. Logic},
   volume={144},
   date={2006},
   number={1-3},
   pages={2--32},
   issn={0168-0072}
}

\bib{Limsup}{article}{
   author={Elekes, M\'{a}rton},
   author={Flesch, J\'{a}nos},
   author={Kiss, Viktor},
   author={Nagy, Don\'{a}t},
   author={Po\'{o}r, M\'{a}rk},
   author={Predtetchinski, Arkadi},
   title={Games characterizing limsup functions and Baire class 1 functions},
   journal={J. Symb. Log.},
   volume={87},
   date={2022},
   number={4},
   pages={1459--1473},
   issn={0022-4812}
}

\bib{balcerzak}{arXiv}{
  author={Marek Balcerzak},
  author={Tomasz Natkaniec},
  author={Piotr Szuca},
  title={Games characterizing certain families of functions},
  date={2023},
  eprint={2109.05458},
  archiveprefix={arXiv}
}

\end{biblist}
\end{bibdiv}

\end{document}